\documentclass[11pt,oneside,reqno]{amsart}
\usepackage{amssymb}
\usepackage{color}
\usepackage{tikz-cd}
\usetikzlibrary{decorations.markings}

\usepackage{xcolor}
\definecolor{deepgreen}{cmyk}{0.99998,0,1,0}

\usepackage{hyperref}
\hypersetup{hypertex=true,
	colorlinks=true,
	linkcolor=blue,
	anchorcolor=blue,
	citecolor=blue}
\usepackage{comment}
\usepackage{mathrsfs}

\allowdisplaybreaks[4]

\usepackage{etoolbox}

\usepackage{bm}
\usepackage{bbm}

\usepackage[new]{old-arrows}

\theoremstyle{definition}
\newtheorem{defi}{$\mathbf{Definition}$}[section]
\newtheorem*{pro}{$\mathbf{Proof}$}

\theoremstyle{plain}
\newtheorem{theo}[defi]{$\mathbf{Theorem}$}

\newtheorem{coro}[defi]{$\mathbf{Corollary}$}

\newtheorem{prop}[defi]{$\mathbf{Proposition}$}

\numberwithin{equation}{section}

\newcommand{\pa}{\partial}

\newcommand{\lv}{\left\vert}
\newcommand{\rv}{\right\vert}
\newcommand{\lV}{\left\Vert}
\newcommand{\rV}{\right\Vert}

\newcommand{\Bv}{\Big\vert}

\newcommand{\bV}{\big\Vert}

\newcommand{\bbV}{\bigg\Vert}

\newcommand{\cref}{\S\ \ref}
\newcommand{\Cref}{\S\ \ref}

\newcommand{\Sc}{\mathrm{Sc}}

\DeclareFontFamily{U}{mathx}{\hyphenchar\font45}
\DeclareFontShape{U}{mathx}{m}{n}{
	<5> <6> <7> <8> <9> <10>
	<10.95> <12> <14.4> <17.28> <20.74> <24.88>
	mathx10
}{}
\DeclareSymbolFont{mathx}{U}{mathx}{m}{n}
\DeclareFontSubstitution{U}{mathx}{m}{n}
\DeclareMathAccent{\widecheck}{0}{mathx}{"71}
\DeclareMathAccent{\wideparen}{0}{mathx}{"75}

\newcommand{\sgn}{\mathrm{sgn}}
\newcommand{\R}{\mathbb{R}}

\newcommand{\supp}{\mathrm{supp}}

\newcommand{\DM}{{\widetilde{D}}}

\newcommand{\interior}[1]{%
	{\kern0pt#1}^{\mathrm{\,o}}%
}
\definecolor{pink}{RGB}{249,164,186}
\definecolor{grassgreen}{RGB}{128,255,0}

\numberwithin{equation}{section}
\title[]{Gromov's Simplicial Volume Vanishing Conjecture for Positive Scalar Curvature}
\author{Qiaochu Ma}
\address[Qiaochu Ma]{Department of Mathematics, Texas A\&M University }
\email{qiaochu@tamu.edu}
\author{Jinmin Wang}
\address[Jinmin Wang]{State Key Laboratory of Mathematical Sciences, Academy of Mathematics and Systems Science, Chinese Academy of Sciences}
\email{jinmin@amss.ac.cn}
\author{Zhizhang Xie}
\address[Zhizhang Xie]{ Department of Mathematics, Texas A\&M University }
\email{xie@tamu.edu}
\author{Guoliang Yu}
\address[Guoliang Yu]{ Department of Mathematics, Texas A\&M University }
\email{guoliangyu@tamu.edu}
\author{Bo Zhu}
\address[Bo Zhu]{Yau Mathematical Sciences Center, Tsinghua University}
\email{zhub@tsinghua.edu.cn}
\date{}

\makeatletter
\newcommand*{\rom}[1]{{\mathscr{B}_{\varepsilon}}andafter\@slowromancap\romannumeral #1@}
\makeatother

\begin{document} 
	\clearpage
	
	\begin{abstract}
		%	\fontsize{10pt}{11pt}\selectfont 
		In this paper, we prove Gromov's simplicial volume vanishing conjecture for closed manifolds with spin universal cover. More precisely, we show that if a closed oriented manifold admits a metric of nonnegative scalar curvature and its universal cover is spin, then its simplicial volume vanishes. In particular, a closed oriented aspherical manifold with nonzero simplicial volume admits no metric of nonnegative scalar curvature.

	\end{abstract}
	\maketitle
	\section{Introduction}

	\subsection{Background}\label{back}

	Scalar curvature plays a fundamental role in differential geometry and in Einstein's general theory of relativity, see Lichnerowicz \cite{MR156292}, Schoen-Yau \cite{MR541332,MR526976}, Gromov-Lawson \cite{MR569070,MR577131}, and Witten \cite{MR626707}. The study of manifolds with controlled scalar curvature lies at the interface between flexibility and rigidity. Although scalar curvature is the weakest curvature invariant, it nevertheless imposes subtle topological constraints. A central theme in scalar curvature geometry is to understand how positive scalar curvature restricts the topological complexity of a manifold.
	
	One of the fundamental results in this direction is the Lichnerowicz vanishing theorem \cite{MR156292}, which detects topological complexity through a characteristic number called $\widehat{A}$-genus,
	\begin{equation}\label{l1.1}
		\parbox{0.8\textwidth}{\centering\textit{if a closed spin manifold admits a metric of positive scalar curvature, then its  $\widehat A$-genus vanishes.}}
	\end{equation}
	
	Gromov's \emph{simplicial volume} is another important invariant that provides a quantitative measure of the topological complexity of a manifold. It is defined as the infimum of the sum of the absolute values of the coefficients in a singular cycle representing the fundamental class. In \cite[\S\,3.A]{MR859578} and \cite[\S\,3.13]{MR4577903}, Gromov conjectured that
	\begin{equation}\label{c1.4}
		\parbox{0.8\textwidth}{\centering\textit{if a closed oriented manifold admits a metric of positive scalar curvature, then its simplicial volume vanishes.}}
	\end{equation}
	Guth \cite{MR2753599} and Braun-Sauer \cite{MR4386411} obtained bounds on simplicial volume using a macroscopic version of scalar curvature and volume.  Ma-Yu \cite{ma2025smallscaleindextheory,mayu2026} showed that the simplicial norm of the Poincaré dual of the $\widehat A$-class can be controlled in terms of a scalar curvature lower bound, the volume, and the injectivity radius of the universal cover.

	In this paper, we prove Gromov's conjecture \eqref{c1.4} for all closed oriented manifolds whose universal cover is spin.

	\subsection{Main results}

	Let $M$ be an $m$-dimensional closed oriented manifold. Let $[M]\in H_m(M)$ be its fundamental class and $\widehat{A}(TM)\in H^*(M)$ be its $\widetilde{A}$-class. For $k\in\mathbb{N}$, let $\widehat{A}^{[k]}(TM)\in H^{k}(M)$ denote the $k$-th degree component of $\widehat{A}(TM)$. The cap product $\widehat{A}^{[k]}(TM)\cap [M]\in H_{m-k}(M)$ defines the Poincaré dual of $\widehat{A}^{[k]}(TM)$. In particular, we have $\widehat{A}^{[0]}(TM)\cap [M]=[M]$.
	
	Let $\bV\widehat{A}^{[k]}(TM)\cap[M]\bV_{\ell^1}$ be  \emph{the simplicial norm} of $\widehat{A}^{[k]}(TM)\cap[M]$, defined as the infimum of the sum of the absolute values of the coefficients in a singular cycle representative. In particular, the simplicial norm $\bV[M]\bV_{\ell^1}$ of the fundamental class is called \emph{the simplicial volume}.

	\begin{theo}\label{t1.2'}
		Let $M$ be an $m$-dimensional closed oriented manifold whose universal covering $\widetilde{M}$ is spin. Suppose that M admits a Riemannian metric of positive scalar curvature. Then  the Poincaré dual of each of the component of the $\widehat{A}$-class has vanishing simplicial norm; that is, for any $0\leqslant k\leqslant m$,
		\begin{equation}\label{1.11n}
			\bV\widehat{A}^{[k]}(TM)\cap[M]\bV_{\ell^1}=0.
		\end{equation}
		In particular, when $k=0$, the simplicial volume $\bV[M]\bV_{\ell^1}$ vanishes.
	\end{theo}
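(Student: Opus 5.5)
The plan is to combine the quantitative estimate of Ma--Yu with a rescaling argument. By \cite{ma2025smallscaleindextheory,mayu2026}, for a closed manifold with spin universal cover, the simplicial norm of $\widehat{A}^{[k]}(TM)\cap[M]$ is bounded in terms of a scalar curvature lower bound $\Sc_g\geqslant \sigma>0$, the volume, and the injectivity radius of $\widetilde{M}$. Such a bound cannot improve under scaling, since scaling the metric multiplies $\sigma$, volume and injectivity radius by compensating powers. So the idea is instead to pass to a family of auxiliary metrics or covers in which the relevant ratio degenerates. Concretely, we aim for an estimate of the shape
\begin{equation*}
\bV\widehat{A}^{[k]}(TM)\cap[M]\bV_{\ell^1}\leqslant C_m\,\mathrm{vol}(M,g)\,\sigma^{m/2}\,\Phi\big(\sigma\cdot \mathrm{inj}(\widetilde{M})^2\big),
\end{equation*}
where $\Phi(t)\to 0$ as $t\to\infty$. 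The point is that positive scalar curvature should force the ``small-scale'' index contribution to concentrate at scale $\sigma^{-1/2}$, whereas the simplicial norm is controlled by a cycle built from a straightening at scale comparable to the injectivity radius.

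The key steps, in order, are as follows.

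\textbf{Step 1 (Cycle at scale $r$).} For $r$ less than the injectivity radius of $\widetilde M$, represent $\widehat{A}^{[k]}(TM)\cap[M]$ by a cycle $c_r$ in the nerve of a $\pi_1$-equivariant good cover of $\widetilde M$ by $r$-balls. Then push $c_r$ down to $M$ by straightening. Its $\ell^1$-norm is controlled by the local index densities of the twisted Dirac operators on $r$-balls, weighted against $\mathrm{vol}/r^m$.

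\textbf{Step 2 (Vanishing of local terms).} Show that each local coefficient is the pairing of a localized higher index, in the spirit of Roe algebras with propagation $\leqslant r$, with a cyclic cocycle supported on the corresponding simplex. Positive scalar curvature $\sigma$ makes the localized index vanish once $r\gg\sigma^{-1/2}$, by a Lichnerowicz-type estimate $D^2\geqslant\sigma/4$ combined with finite-propagation functional calculus. The coefficients then decay, quantitatively, like $e^{-c\,r\sqrt{\sigma}}$.

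\textbf{Step 3 (Making $r$ large).} The injectivity radius of $\widetilde M$ is fixed, so we replace $M$ by $M\times T^N$ with a large flat torus, or by warped products, so that the straightening happens at large scale. Alternatively, we use amenable-type averaging over finite covers where available. Since $\widetilde M$ is spin but $M$ need not be, all index theory is done $\pi_1(M)$-equivariantly on $\widetilde M$, which is exactly where the spin hypothesis is needed. Finally, multiplicativity of the simplicial norm under products with tori, together with the degree-one comparisons, transfers vanishing back to $M$. The case $k=0$ gives $\bV[M]\bV_{\ell^1}=0$.

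The main obstacle is Step 3: the simplicial norm is insensitive to scale, so one must genuinely enlarge the ratio $r\sqrt{\sigma}$. I would first try to avoid relying on injectivity radius altogether. The approach would be to use a bounded-geometry cover built from the scalar curvature scale $\sigma^{-1/2}$ alone, with the dimension-only bound on multiplicity of the cover supplying the $\ell^1$ control. Then I would let the exponential decay of Step 2 beat the polynomial growth coming from the refinement.
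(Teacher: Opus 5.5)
There is a genuine gap, and it sits in Step 3. You are right that a bound in terms of $\sigma$, the volume and $\mathrm{inj}(\widetilde M)$ is scale invariant and cannot give vanishing by itself, but none of your proposed fixes changes that.

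\textbf{Products with tori carry no information.} Since $\|[T^N]\|_{\ell^1}=0$ and $\|\alpha\times\beta\|_{\ell^1}\leqslant C\|\alpha\|_{\ell^1}\|\beta\|_{\ell^1}$, the class $\widehat{A}^{[k]}(T(M\times T^N))\cap[M\times T^N]=(\widehat{A}^{[k]}(TM)\cap[M])\times[T^N]$ has zero norm for \emph{every} $M$. The lower bound $\|M\|\,\|N\|\leqslant\|M\times N\|$ is vacuous when $\|N\|=0$, so nothing transfers back to $M$.

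\textbf{Nothing degenerates either.} We have $\widetilde{M\times T^N}=\widetilde M\times\mathbb{R}^N$, and finite covers have the same universal cover. So $\mathrm{inj}(\widetilde M)$ is unchanged, while $\sigma\,\mathrm{inj}(\widetilde M)^2$ is scale invariant.

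\textbf{The fallback meets the real obstruction.} If you use a nerve at scale $\sigma^{-1/2}$ beyond the contractibility radius of $\widetilde M$, simplices can no longer be filled in $M$. A large-scale nerve cycle is then not a cycle in $M$ representing the class. Also, at large radii the multiplicity of ball covers is not bounded by the dimension alone when $\widetilde M$ has exponential growth.

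The paper's proof has two ingredients your proposal lacks.

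\textbf{Straighten in $B\Gamma$, not in $M$.} Since $E\Gamma$ is contractible, every tuple $(\gamma_0,\dots,\gamma_k)$ spans a simplex. Theorem \ref{gammaindex} then gives, for \emph{every} $\varepsilon$ with the metric fixed, an explicit group chain representing $f_*(\widehat{A}^{[m-k]}(TM)\cap[M])$. Its coefficients are traces of products of the localized blocks $I_{\varepsilon,\gamma,\gamma'}$. Gromov's mapping theorem, $\|\omega\|_{\ell^1}=\|f_*\omega\|_{\ell^1}$, brings the bound back to $M$. This yields \eqref{4.12}, with the propagation $\varepsilon$ free to tend to infinity and the injectivity radius never entering.

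\textbf{Decay must be insensitive to volume growth.} For the manifolds that matter, $\widetilde M$ has exponential growth, because nonvanishing simplicial volume forces $\pi_1(M)$ to be non-amenable. Coefficient decay like $e^{-cr\sqrt{\sigma}}$ against $e^{Ckr}$ group elements is a scale-invariant contest you cannot force to go your way. There is no ``polynomial growth from refinement'' to beat. The paper proceeds as follows:
\begin{itemize}
\item It bounds the row norms $\sup_x\int\|I_\varepsilon(x,x')\|\,dv(x')$ rather than counting simplices.
\item It writes the entries of $I_\varepsilon$, via $\mathrm{erfc}$, as integrals of the heat semigroup $e^{-s\widetilde{D}^2}$.
\item It uses the Kato domination $\|e^{-s\widetilde{D}^2}(x,x')\|\leqslant e^{-cs}p_s(x,x')$ together with $\int p_s(x,x')\,dv(x')\leqslant 1$. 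This bound is completely independent of growth.
\item It takes the Gaussian width to be $\varepsilon^\beta$ with $0<\beta<1/2$. Then the finite-propagation cutoff error $e^{-c\varepsilon^{2(1-\beta)}}$ beats the $e^{C\varepsilon}$ volume of the propagation balls.
\end{itemize}
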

	
	Compared with Gromov's conjecture \eqref{c1.4}, Theorem \ref{t1.2'} gives a stronger conclusion for closed manifolds whose universal cover is spin. Rather than asserting only the vanishing of the simplicial volume, it proves the vanishing of the simplicial norm of the Poincaré dual of every component of the $\widehat{A}$-class.

	We also have the following consequence of the main theorem, which, in particular, shows that nonvanishing simplicial volume obstructs the existence of metrics of nonnegative scalar curvature on aspherical manifolds. 
	\begin{coro}\label{t1.2}
		Let $M$ be a closed oriented manifold with spin
		universal cover. If it has positive simplicial volume, then it admits no
		Riemannian metric of nonnegative scalar curvature. In particular, every
		closed oriented aspherical manifold with 
		positive simplicial volume admits no Riemannian metric of nonnegative
		scalar curvature.
	\end{coro}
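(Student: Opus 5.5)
We deduce Corollary \ref{t1.2} from Theorem \ref{t1.2'}, with the classical rigidity of nonnegative scalar curvature handling the borderline case. Suppose $M$ is closed, oriented, has spin universal cover, $\bV[M]\bV_{\ell^1}>0$, and carries a metric $g$ with $\Sc(g)\geqslant 0$. If $\Sc(g)>0$ somewhere, the Kazdan--Warner / Bourguignon deformation (flow $g$ for a short time by Ricci flow, or conformally change it using the first eigenfunction of the conformal Laplacian, whose first eigenvalue is then positive) gives a metric of positive scalar curvature on $M$. Theorem \ref{t1.2'} with $k=0$ then forces $\bV[M]\bV_{\ell^1}=0$, a contradiction.

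So we may assume $\Sc(g)\equiv 0$. If $g$ is not Ricci flat, the same deformation (Bourguignon's theorem: a scalar-flat metric that is not Ricci-flat can be perturbed to positive scalar curvature) again gives a positive scalar curvature metric, and we conclude as before. Hence $g$ is Ricci flat. By the Cheeger--Gromoll splitting theorem, the universal cover splits isometrically as $\widetilde M=\R^{\ell}\times N$ with $N$ compact and simply connected, and $\pi_1(M)$ contains a finite-index free abelian subgroup $\mathbb{Z}^{\ell}$. Thus $\pi_1(M)$ is virtually abelian, in particular amenable. By Gromov's vanishing theorem, closed manifolds with amenable fundamental group have zero simplicial volume. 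This is again a contradiction, which proves the first assertion.

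For the aspherical statement, if $M$ is aspherical then $\widetilde M$ is contractible, hence has trivial tangent bundle Stiefel--Whitney classes. In particular $w_2(\widetilde M)=0$, so $\widetilde M$ is spin and the first part applies.

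The main point needing care is the scalar-flat case. I expect it to be handled by the standard trichotomy (positive scalar curvature deformation, or Ricci flat) together with Cheeger--Gromoll and amenability, so there is no real obstacle beyond citing these classical results correctly. The essential content is Theorem \ref{t1.2'} itself.
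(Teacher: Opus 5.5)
Your proposal is correct and follows the paper's proof essentially step by step. Both use a conformal (Kazdan--Warner) deformation when $\Sc$ is positive somewhere, Bourguignon's theorem to reduce the scalar-flat case to the Ricci-flat case, Cheeger--Gromoll to get a virtually abelian and hence amenable $\pi_1(M)$ with vanishing simplicial volume, and contractibility of $\widetilde M$ to get the spin condition in the aspherical case.
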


	% \subsubsection{Simplicial norm vanishing}\label{ISPP}

	% We are now ready to state our main result.

	% We refer to \S\,\ref{LSE} for the proof of Theorem \ref{t1.2'}. 

	\subsection{Techniques and related work}
	
	We now discuss some of the techniques used in the proof and mention additional related work beyond that
	already cited. The topics involved are broad, and the references below are very incomplete.
	
	\subsubsection{Simplicial norms}

	One of the important features of the simplicial norm is its sensitivity to large scale and negatively curved phenomena, and hence its ability to detect geometric and topological complexity. For instance, the simplicial volume vanishes for closed oriented manifolds with an amenable fundamental group, while Gromov-Thurston \cite[\S\,1.2]{MR686042} showed that the simplicial volume of a hyperbolic manifold is proportional to its volume. Gromov \cite{MR636516} used simplicial volume to give a new proof of Mostow rigidity. Since then, the simplicial norm has become an important invariant in geometry, topology, and dynamics. For further developments, see Thurston \cite{MR1435975}, Lück \cite{MR1926649}, Lafont-Schmidt \cite{MR2285319}, Bergeron-Şengün-Venkatesh \cite{MR3513571}, Brock-Dunfield \cite{MR3714511}, Connell-Wang \cite{MR4527829}, and Löh-Moraschini-Raptis \cite{MR4455175}.

	\subsubsection{Scalar curvature geometry and index theory}
	
	The interaction between scalar curvature and topology is a foundational subject in differential geometry, see for example, Schoen-Yau \cite{MR541332,MR1013841}, Gromov-Lawson \cite{MR720933}, Rosenberg \cite{MR720934,MR866507}, Connes \cite{MR866491}, Block-Weinberger \cite{MR1145337,MR1758300}, Stolz \cite{MR1189863}, Roe \cite{MR1147350}, Rosenberg-Stolz \cite{MR1818778} and Zhang \cite{MR3664818,MR4089513}.

	The proof of our main result is inspired by the strategy of Connes-Moscovici \cite{MR1066176} in their approach to the Novikov conjecture. 
	In their seminal work, they proved the Novikov conjecture for groups satisfying property RD, under the additional assumption that every group cohomology class admits a representative cocycle of polynomial growth.
	
	A key ingredient in the present paper is the quantitative index theory developed in Ma-Yu \cite{ma2025smallscaleindextheory,mayu2026}. This theory allows us to use functional calculus of the Dirac operator to construct explicit representatives of the fundamental class and, more generally, of the Poincaré duals of the $\widehat{A}$-class. In this way, it establishes a direct connection between scalar curvature and simplicial norm. Positive scalar curvature allows us to deform these homology cycles, through a continuous deformation of the Dirac operator, to representatives with arbitrarily small simplicial norm. An important new refinement in the proof is the use of heat kernel to derive sharper estimates for the functional calculus.

	Quantitative index theory is closely related to quantitative operator $K$-theory, see Yu \cite{MR1626745,MR1728880,MR3590530}, Oyono-Oyono-Yu \cite{MR3449169}, and Willett-Yu \cite{MR4411373}. These methods have found several applications in the study of topological rigidity, the Novikov conjecture, and scalar curvature geometry, see, for example, Guentner-Tessera-Yu \cite{MR2947546}, Gong-Wu-Yu \cite{MR4268302}, Nowak-Yu \cite{MR4646531}, Wang-Xie-Yu \cite{MR4666628,MR4810224}, and Chen-Wang-Xie-Yu \cite{MR4673948}.

	Guth's surveys \cite{MR2827817,MR3838118} introduce recent results in quantitative topology and their connections with scalar curvature, systolic geometry, and large scale geometry. For further developments, see Guth \cite{MR2753599}, Alpert-Funano \cite{MR3678499}, Chambers-Dotterrer-Manin-Weinberger \cite{MR3836564}, Chambers-Manin-Weinberger \cite{MR3816519}, Braun-Sauer \cite{MR4386411}, and Alpert-Balitskiy-Guth \cite{MR4790653,MR5032981}.

	\subsubsection{A comment on our approach}

	We point out that our argument can also be formulated in the framework of cyclic homology. The present proof, however, works directly with singular homology, making the argument more elementary. We hope that the present paper, together with Ma-Yu \cite{ma2025smallscaleindextheory,mayu2026}, may also serve as an accessible entry for readers interested in noncommutative geometry, Connes-Moscovici's higher index theorem, and index theoretic methods in the study of the Novikov conjecture.

	\subsection{Organization of the article}
	
	In \S\,\ref{SSIT}, we introduce quantitative homology, which interpolates between the homology of $M$ and the group homology of $\Gamma$. In \S\,\ref{LSG}, we recall the quantitative index theorem, which expresses $\widehat A^{[m-k]}(TM)\cap [M]$ by explicit finite propagation operators associated to the Dirac operator. In \S\,\ref{simplicial}, we recall some useful properties of the simplicial norm. In \S\,\ref{LSE}, we combine the quantitative index formula with heat kernel estimates to prove the vanishing of the simplicial norms.

	The symbol $C$ denotes a positive constant. When the dependence of $C$ on certain parameters is important, this will be indicated by a subscript. The value of $C$ may change from line to line, even when the same notation is used.

	\subsection{Acknowledgments}

	We thank Misha Gromov and Nigel Higson for helpful comments and suggestions. Qiaochu Ma is partially supported by NSF DMS-2247313. Jinmin Wang is partially supported by NSFC 12501169. Zhizhang Xie is partially by NSF DMS-2247322. Guoliang Yu is partially supported by NSF DMS-2247313. Bo Zhu is partially supported by NSFC 12501066.

	\section{Quantitative Homology}\label{SSIT}

	In this section, we introduce epsilon homology by Ma-Yu \cite[\S\,2]{ma2025smallscaleindextheory,mayu2026}, a quantitative version of Alexander-Spanier homology. In \S\,\ref{s3.2}, we recall the classifying map and the standard identification of the homology of the classifying space with group homology. In \S\,\ref{S2.1}, we define
	epsilon homology. In \S\,\ref{S2.2}, we show that epsilon homology interpolates between the homology of the manifold and
	the homology of the classifying space.%SASB

	\subsection{Homology and large scale homology}\label{s3.2}

	Let $M$ be a manifold with its universal covering $\widetilde{M}$ and fundamental group $\Gamma=\pi_1(M)$. Let $B\Gamma$ be the classifying space of $\Gamma$ such that $\pi_1(B\Gamma)=\Gamma$ and its universal covering $E\Gamma=\widetilde{B\Gamma}$ is contractible. Then there exists a canonical classifying map $f\colon M\to B\Gamma$ and the following commutative diagram
	\begin{equation}\label{2.1m}
		\begin{tikzcd}[ampersand replacement=\&, column sep=normal,row sep=normal,background color=white!20]
			\widetilde{M}\cong f^*(E\Gamma)\arrow[d,""]\arrow[r,""]\&E\Gamma\arrow[d,""]\\
			M\arrow[r,"f"]\& B\Gamma
		\end{tikzcd},
	\end{equation}
	which induces a morphism at the level of homology
	\begin{equation}\label{2.2m}
		f_*\colon H_*(M)\to H_*(B\Gamma).
	\end{equation}

	The homology $H_*(B\Gamma)$ can be described explicitly in terms of the group homology of $\Gamma$. We now recall the relevant definition.

	Let $C^k(\Gamma)$ be the space of $\Gamma$-invariant functions on $\Gamma^{k+1}$, with $\Gamma$ acting diagonally on $\Gamma^{k+1}$, that is,
	\begin{equation}
		C^k(\Gamma)=\big\{\alpha\mid \alpha(\gamma_0,\cdots\gamma_k)=\alpha(\gamma\gamma_0,\cdots\gamma\gamma_k) \text{ for any }\gamma\in\Gamma\big\}.
	\end{equation}
	Let us define the coboundary map
	\begin{equation}
		\begin{split}
			&\pa\colon C^k(\Gamma)\to C^{k+1}(\Gamma),\\
			&\partial \alpha(\gamma_0,\cdots,\gamma_{k+1})=\sum_{i}(-1)^i\alpha(\gamma_0,\cdots,\widehat{\gamma_i},\cdots\gamma_{k+1}),
		\end{split}
	\end{equation}
	where the ‘hat’ symbol over $\gamma_i$ indicates that this variable is deleted. The group cohomology $H^*(\Gamma)$ of $\Gamma$ is defined by the cohomology of the cochain $(\pa,C^*(\Gamma ))$,
	\begin{equation}
		H^*(\Gamma)=H^*\big(\pa,C^*(\Gamma )\big).
	\end{equation}
	
	Similarly, let $C_k(\Gamma)$ be the space of $\Gamma$-invariant measures on $\Gamma^{k+1}$, with $\Gamma$ acting diagonally on $\Gamma^{k+1}$, that is,
	\begin{equation}
		C_k(\Gamma)=\big\{\mu\mid \mu(\gamma_0,\cdots\gamma_k)=\mu(\gamma\gamma_0,\cdots,\gamma\gamma_k) \text{ for any }\gamma\in\Gamma\big\}.
	\end{equation}
	Let us define the boundary map
	\begin{equation}
		\begin{split}
			&\pa\colon C_k(\Gamma)\to C_{k-1}(\Gamma),\\
			&\partial\delta_{(\gamma_0,\cdots,\gamma_{k})}=\sum_{i}(-1)^i\delta_{(\gamma_0,\cdots,\widehat{\gamma_i},\cdots\gamma_{k})},
		\end{split}
	\end{equation}
	where $\delta_{(\gamma_0,\cdots,\gamma_{k})}$ is the delta measure at $(\gamma_0,\cdots,\gamma_k)\in\Gamma^{k+1}$. The group homology $H_*(\Gamma)$ of $\Gamma$ is defined by the homology of the chain $(\pa,C_*(\Gamma ))$,
	\begin{equation}
		H_*(\Gamma)=H_*\big(\pa,C_*(\Gamma )\big).
	\end{equation}

	The following result is classical, we give the proof since it motivates further discussions.
	\begin{prop}\label{prop2.1m}
		We have
		\begin{equation}\label{3.5}
			H_*(\Gamma)\cong H_*(B\Gamma).
		\end{equation}
	\end{prop}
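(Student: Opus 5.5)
We prove the isomorphism by realizing both sides as the homology of the same simplicial object and then invoking contractibility of $E\Gamma$. We use the standard simplicial model: $E\Gamma$ is the geometric realization of the simplicial set whose $k$-simplices are ordered tuples $(\gamma_0,\dots,\gamma_k)\in\Gamma^{k+1}$, with faces given by deleting an entry. We take $B\Gamma=E\Gamma/\Gamma$ for the free diagonal left action. Since $B\Gamma$ is unique up to homotopy equivalence, any model may be used. A model-independent route is also available: compare the singular chain complex $C_*(E\Gamma)$ with the bar resolution. Either way, the plan has the following steps.

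\textbf{Step 1: the chain complex is a free resolution.} Consider the full chain complex of $\Gamma$-finitely-supported chains on $\Gamma^{k+1}$ (the bar resolution $\mathbb{Z}[\Gamma^{k+1}]$). Acyclicity follows from the contracting homotopy $h(\gamma_0,\dots,\gamma_k)=(e,\gamma_0,\dots,\gamma_k)$, which satisfies $\partial h+h\partial=\mathrm{id}$ in positive degrees and augments correctly in degree $0$. Moreover, $\Gamma$ acts freely on $\Gamma^{k+1}$, so each term is a free $\mathbb{Z}\Gamma$-module. This gives a free resolution of the trivial module $\mathbb{Z}$.

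\textbf{Step 2: identify invariant measures with coinvariants.} $\Gamma$-invariant measures on $\Gamma^{k+1}$ with finitely many orbits in their support correspond bijectively to chains on the orbit set $\Gamma\backslash\Gamma^{k+1}$. This identifies $C_*(\Gamma)$ with $\mathbb{Z}\otimes_{\mathbb{Z}\Gamma}\mathbb{Z}[\Gamma^{*+1}]$, and the boundary maps agree. So $H_*(\pa,C_*(\Gamma))$ is $\mathrm{Tor}^{\mathbb{Z}\Gamma}_*(\mathbb{Z},\mathbb{Z})$, computed by this particular resolution.

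\textbf{Step 3: compare with $B\Gamma$.} The singular chains $C_*(E\Gamma)$ also form a free $\mathbb{Z}\Gamma$-resolution of $\mathbb{Z}$. They are free because the deck action is free: choose one lift of each orbit of singular simplices. They are acyclic because $E\Gamma$ is contractible. Furthermore, $C_*(E\Gamma)_\Gamma\cong C_*(B\Gamma)$, since singular simplices in $B\Gamma$ lift uniquely up to deck transformations, as simplices are simply connected. By the fundamental lemma of homological algebra, any two free resolutions are chain homotopy equivalent over $\mathbb{Z}\Gamma$. Taking coinvariants preserves chain homotopy equivalence, which yields $H_*(\Gamma)\cong H_*(B\Gamma)$.

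To make the comparison explicit, in the spirit the paper seems to want, we construct the map from simplices of $E\Gamma$ to tuples directly. Fix a fundamental domain, or a $\Gamma$-equivariant map $\phi: E\Gamma\to\Gamma$ sending each point to the group element labelling its domain. Send a singular simplex $\sigma$ to $(\phi(\sigma(v_0)),\dots,\phi(\sigma(v_k)))$; this map is equivariant and commutes with boundaries. In the other direction, use contractibility of $E\Gamma$ to cone off inductively, equivariantly on orbit representatives. This produces an equivariant chain map $\Gamma^{k+1}\to$ singular simplices with vertices $\gamma_i x_0$.

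\textbf{Main obstacle.} The delicate point is the convention that $C_*(\Gamma)$ consists of invariant measures. The identification with coinvariants requires finiteness of support modulo $\Gamma$, which we impose as the standing interpretation. We also need the chain homotopies to be $\Gamma$-equivariant, which follows from freeness of the action via the standard inductive construction on orbit representatives.
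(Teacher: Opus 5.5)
Your proof is correct, but it puts the work in a different place than the paper does. The paper never passes through $\mathrm{Tor}$. It writes down two explicit chain maps:
\begin{itemize}
\item $f^{0,\infty}$, which fills in a singular simplex with vertices $\gamma_0y_b,\dots,\gamma_ky_b$, $\Gamma$-equivariantly and compatibly with faces, using contractibility of $E\Gamma$;
\item $f^{\infty,0}$, which records which translate $\gamma_iF$ of a fundamental domain contains each vertex.
\end{itemize}
It then appeals to an equivariant prism-operator argument to conclude that these maps are homotopy inverse.

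You instead recognize both complexes as coinvariants of free $\mathbb{Z}\Gamma$-resolutions of the trivial module (the bar resolution and $C_*(E\Gamma)$), and then invoke the comparison theorem. Your final paragraph recovers exactly the paper's two maps. Your framework then certifies in one stroke what the paper's prism sketch only asserts: both maps are augmentation-preserving $\mathbb{Z}\Gamma$-chain maps between free resolutions, so they are mutually inverse up to equivariant chain homotopy.

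Your route is cleaner and model-independent. It also makes explicit two points the paper leaves tacit:
\begin{itemize}
\item for a free action, $\Gamma$-invariant chains can be identified with coinvariants;
\item invariant measures must be supported on finitely many orbits.
\end{itemize}
The second requirement is in fact forced, because infinitely many orbits in $\Gamma^{k+1}$ share the same face orbit. Without it, $\pa$ involves divergent sums.

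The paper prefers its route because the explicit map $f^{\infty,0}$ is the prototype of $f^{\infty,\varepsilon}$ in \eqref{2.13}. That map is what converts the Connes--Chern chain into the explicit group chain \eqref{3.10}, whose coefficient sum bounds the simplicial norm.

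One minor point: the paper's chains are real or complex measures, so you should work over $\mathbb{R}$ or $\mathbb{C}$ rather than $\mathbb{Z}$. Nothing in your argument changes.
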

	
	\begin{pro}
		
		The $\Gamma$-equivariant singular chain complex $(\partial,C_*(E\Gamma,\Gamma))$ of $E\Gamma$ computes the homology $H_*(B\Gamma)$ of $B\Gamma$, namely that
		\begin{equation}
			H_*\bigl(C_*(E\Gamma,\Gamma)\bigr)\cong H_*(B\Gamma).
		\end{equation}
		It suffices to construct a homotopy equivalence between $(\pa,C_*(\Gamma))$ and $(\pa,C_*(E\Gamma,\Gamma))$.
		
		First, since $E\Gamma$ is contractible, we can assign to each $(y_0,\cdots,y_k)\in (E\Gamma)^{k+1}$ a singular simplex $\Delta_{(y_0,\cdots,y_k)}$ having vertices $(y_0,\cdots,y_k)$, such that every subset of $(y_0,\cdots,y_k)$ is assigned to the corresponding face of $\Delta_{(y_0,\ldots,y_k)}$. Moreover, this assignment can be chosen $\Gamma$-equivariantly.

		We choose a bounded fundamental domain $F\subset E\Gamma$ and a base point $y_b\in F$. Define
		\begin{equation}\label{2.11m}
			\begin{split}
				&f^{0,\infty}\colon  C_*(\Gamma)\to C_*(E\Gamma,\Gamma),\\
				&f^{0,\infty}\big(\delta_{(\gamma_0,\cdots,\gamma_k)}\big)=\Delta_{(\gamma_0y_b,\cdots,\gamma_ky_b)}.
			\end{split}
		\end{equation}
		The meaning of the notation $f^{0,\infty}$ will become clear soon in the next subsection.
		
		Conversely, define
		\begin{equation}\label{2.12m}
			\begin{split}
				&f^{\infty,0}\colon C_*(E\Gamma,\Gamma)\to C_*(\Gamma),\\
				&f^{\infty,0}(\Delta)=\delta_{(\gamma_0,\cdots,\gamma_k)},
			\end{split}
		\end{equation}
		where $\Delta$ is a singular simplex with vertices $(y_0,\cdots,y_k)$, and $\gamma_0,\ldots,\gamma_k\in\Gamma$ are chosen so that $(y_0,\ldots,y_k)\in \gamma_0F\times\cdots\times\gamma_kF$. For instance, when $(\Gamma,B\Gamma,E\Gamma,F)=(\mathbb{Z}^m,\mathbb{T}^m,\mathbb{R}^m,[0,1)^m)$, then a simplex $\Delta$ with vertices $(y_0,\cdots,y_k)$ is mapped to $\delta_{(\lfloor y_0\rfloor,\cdots,\lfloor y_k\rfloor)}$, where the floor function is taken coordinatewise.

		The fact that \eqref{2.11m} and \eqref{2.12m} define homotopy equivalent maps follows from the usual prism operator argument in the proof of homotopy invariance of singular homology, see Hatcher \cite[Theorem 2.10]{MR1867354}. In the present setting, the same argument is applied equivariantly. \qed
		
	\end{pro}

	The left hand sides of \eqref{2.1m} and \eqref{2.2m} contain fine small scale homological information, while the right hand sides encode purely large scale homological information. Thus, these suggest the existence of an interpolation describing the transition from small scale to large scale homology, which we introduce in the next subsection.

	\subsection{Epsilon homology}\label{S2.1}

	For any $(x_0,\cdots,x_k)\in \widetilde{M}^{k+1}$, we define its diameter by
	\begin{equation}
		\mathrm{diam}(x_0,\cdots,x_k)=\max_{0\leqslant i,j\leqslant k}d_{\widetilde{M}}(x_i,x_j).
	\end{equation}
	For any $\varepsilon>0$, define the $\varepsilon$-diagonal of $\widetilde{M}^{k+1}$ by
	\begin{equation}\label{diag}
		\mathrm{diag}_\varepsilon(\widetilde{M}^{k+1})=\{(x_0,\cdots,x_k)\in \widetilde{M}^{k+1}\mid \mathrm{diam}(x_0,\cdots,x_k)\leqslant\varepsilon\}.
	\end{equation}

	Let $C^{k}_\varepsilon(\widetilde{M},\Gamma)$ be the space of $\Gamma$-invariant Borel functions on $\mathrm{diag}_\varepsilon(\widetilde{M}^{k+1})$. We define a coboundary map
	\begin{equation}\label{2.4}
		\begin{split}
			&\pa\colon C^{k-1}_\varepsilon(\widetilde{M},\Gamma)\to C^{k}_\varepsilon(\widetilde{M},\Gamma),\\
			&(\pa f)(x_0,\cdots,x_k)=\sum_{i=0}^{k}(-1)^if(x_0,\cdots,\widehat{x}_i,\cdots,x_k),
		\end{split}
	\end{equation}
	where the ‘hat’ symbol over $x_i$ indicates that this variable is deleted.

	The epsilon cohomology  $H^*_\varepsilon(\widetilde{M},\Gamma)$ is defined as the cohomology of the cochain complex $(\pa,C^{*}_\varepsilon(\widetilde{M},\Gamma))$, that is,
	\begin{equation}
		H^*_\varepsilon(\widetilde{M},\Gamma)=H(\pa,C^{*}_\varepsilon(\widetilde{M},\Gamma)).
	\end{equation}

	Likewise, let $C_{k}^\varepsilon(\widetilde{M},\Gamma)$ be the space of $\Gamma$-invariant measures supported on $\mathrm{diag}_\varepsilon(\widetilde{M}^{k+1})\subseteq \widetilde{M}^{k+1}$.

	By viewing $\alpha\in C^*_\varepsilon(\widetilde{M},\Gamma)$ as a continuous function on $\Gamma\backslash \widetilde{M}^{*+1}$ and $\mu\in C_*^\varepsilon(\widetilde{M},\Gamma)$ as a measure on $\Gamma\backslash \widetilde{M}^{*+1}$, we can define a pairing $\langle\cdot,\cdot\rangle_\Gamma$ between $C^*_\varepsilon(\widetilde{M},\Gamma)$ and $C_*^\varepsilon(\widetilde{M},\Gamma)$ by the integral of $\alpha$ with respect to $\mu$ over $\Gamma\backslash \widetilde{M}^{*+1}$. In other words, let $F\subset \widetilde{M}$ be a bounded fundamental domain of $\Gamma$-action, then $F\times \widetilde{M}^k$ is a fundamental domain of the diagonal $\Gamma$-action on $\widetilde{M}^{k+1}$, and
	\begin{equation}\label{2.10''}
		\langle\alpha,\mu\rangle_\Gamma=\int_{F\times \widetilde{M}^k}\alpha d\mu.
	\end{equation}
	Note that $\mathrm{diag}_\varepsilon(\widetilde{M}^{k+1})\cap \big(F\times \widetilde{M}^k\big)$ is compact, so the integral \eqref{2.10''} is finite.

	We define the dual boundary map
	\begin{equation}
		\begin{split}
			&\pa\colon C_{k}^\varepsilon(\widetilde{M},\Gamma)\to C_{k-1}^\varepsilon(\widetilde{M},\Gamma),\\
			&\langle \pa\mu,\alpha\rangle_\Gamma=\langle \mu,\pa \alpha\rangle_\Gamma,
		\end{split}
	\end{equation}
	where $\partial \alpha$ is given in \eqref{2.4}. We obtain a chain complex $(\pa,C_{*}^\varepsilon(\widetilde{M},\Gamma))$. We define the epsilon homology
	$H_*^\varepsilon(\widetilde{M},\Gamma)$ to be the homology of the chain complex $(\pa,C_{*}^\varepsilon(\widetilde{M},\Gamma))$, that is,
	\begin{equation}
		H_*^\varepsilon(\widetilde{M},\Gamma)=H(\pa,C_{*}^\varepsilon(\widetilde{M},\Gamma)).
	\end{equation}

	For $\varepsilon\leqslant \varepsilon'$, we have an inclusion from $C^{\varepsilon}(\widetilde{M},\Gamma)$ to $C^{\varepsilon'}(\widetilde{M},\Gamma)$, and we denote the induced morphism at the level of homology by
	\begin{equation}
		f^{\varepsilon',\varepsilon}\colon H^{\varepsilon}_*(\widetilde{M},\Gamma)\to H^{\varepsilon'}_*(\widetilde{M},\Gamma).
	\end{equation}
	Then for $\varepsilon\leqslant \varepsilon'\leqslant\varepsilon''$, we have
	\begin{equation}
		f^{(\varepsilon'',\varepsilon')}f^{(\varepsilon',\varepsilon)}=f^{(\varepsilon'',\varepsilon)}.
	\end{equation}
	In other words, we have the following commutative diagram
	\begin{equation}\label{2.22m}
		\begin{tikzcd}[ampersand replacement=\&, column sep=normal,row sep=normal,background color=white!20]
			H^{\varepsilon}_*(\widetilde{M},\Gamma)\arrow[rr,"f^{(\varepsilon'',\varepsilon)}"swap,bend right=20]\arrow[r,"f^{(\varepsilon',\varepsilon)}"]\& H^{\varepsilon'}_*(\widetilde{M},\Gamma)\arrow[r,"f^{(\varepsilon'',\varepsilon')}"] \& H^{\varepsilon''}_*(\widetilde{M},\Gamma)
		\end{tikzcd}.
	\end{equation}

	\subsection{Epsilon homology as an interpolation}\label{S2.2}

	Intuitively, epsilon homology $H^\varepsilon(\widetilde{M},\Gamma)$ provides an interpolation between $H_*(M)$ and $H_*(B\Gamma)$ as $\varepsilon$ changes from $0$ to $\infty$. To see this, we first extend \eqref{2.22m} to limits.

	The $\Gamma$-equivariant singular chain complex $(\partial,C_*(\widetilde M,\Gamma))$ of $\widetilde{M}$ computes the homology $H_*(M)$ of $M$, namely that
	\begin{equation}
		H_*\bigl(C_*(\widetilde M,\Gamma)\bigr)\cong H_*(M).
	\end{equation}
	We define
	\begin{equation}\label{2.12}
		\begin{split}
			&f^{\varepsilon,0}\colon C_*(\widetilde{M},\Gamma)\to C_*^\varepsilon (\widetilde{M},\Gamma),\\
			&f^{\varepsilon,0}(\Delta)=\delta_{(x_0,\cdots,x_k)},
		\end{split}
	\end{equation}
	where $\Delta$ is a simplex with vertices $(x_0,\cdots,x_k)\in \widetilde{M}^{k+1}$, and $\delta_{(x_0,\cdots,x_k)}$ is the delta measure at $(x_0,\cdots,x_k)$.  Note that every singular simplex can be subdivided into simplices whose vertices have diameter no more than $\varepsilon$, so the map $f^{\varepsilon,0}$ is well defined at the level of homology.

	We choose a bounded fundamental domain $F\subset \widetilde{M}$ and define
	\begin{equation}\label{2.13}
		\begin{split}
			&f^{\infty,\varepsilon}\colon C_*^\varepsilon (\widetilde{M},\Gamma)\to C_*(\Gamma),\\ &f^{\infty,\varepsilon}(\mu)=\sum_{\gamma_0,\cdots,\gamma_k\in\Gamma}\big\langle\mu,\mathbbm{1}_{(\gamma_0F)\times\cdots\times(\gamma_kF)}\big\rangle_\Gamma\delta_{(\gamma_0,\cdots,\gamma_k)},
		\end{split}
	\end{equation}
	where
	\begin{equation}
		\mathbbm{1}_{(\gamma_0F)\times\cdots\times(\gamma_kF)}(x_0,\cdots,x_k)=\begin{cases}
			1,&\text{if }x_i\in \gamma_iF \text{ for every }0\leqslant i\leqslant k,\\
			0,&\text{otherwise}.
		\end{cases}
	\end{equation}

	Then we define the homological radius $r(M)$ of $M$ by the supremum of all $\varepsilon>0$ with the following property, for any $x\in\widetilde{M}$, the radius $\varepsilon$ ball $B^{\widetilde{M}}(x,\varepsilon)$ is contractible.
	
	We now explain precisely that $H^\varepsilon(\widetilde{M},\Gamma)$ provides the desired interpolation between $H_*(M)$ and $H_*(B\Gamma)$.
	
	\begin{prop}\label{prop2.1}
		Let $M$ be a closed manifold with universal covering $\widetilde{M}$, fundamental group $\Gamma$, and canonical classifying map $f\colon M\to B\Gamma$. For $0<\varepsilon< r(M)$, the map $f^{\varepsilon,0}$ defined in \eqref{2.12} is a homotopy equivalence, in particular,
		\begin{equation}
			H^\varepsilon(\widetilde{M},\Gamma)\cong H_*(M).
		\end{equation}
		Moreover, for any $\varepsilon>0$ and $f^{\infty,\varepsilon}$ defined in \eqref{2.13},
		\begin{equation}\label{2.28m}
			f^{\infty,\varepsilon}f^{\varepsilon,0}=f_*.
		\end{equation}
		Equivalently, we have the following commutative diagram
		\begin{equation}
			\begin{tikzcd}[ampersand replacement=\&, column sep=normal,row sep=large,background color=white!20]
				\varprojlim_{\varepsilon\to0}H^{\varepsilon}_*(\widetilde{M},\Gamma)\arrow[r]\& H_*^\varepsilon(\widetilde{M},\Gamma)\arrow[dashed,l,"\varepsilon<r(M)"swap,bend right=30]\arrow[r]\arrow[rd,"f^{(\infty,\varepsilon)}"swap]\&\varinjlim_{\varepsilon\to\infty}H^{\varepsilon}_*(\widetilde{M},\Gamma)\arrow[d,"\cong"] \\
				H_*(M)\arrow[ru,"f^{(\varepsilon,0)}"swap]\arrow[u,"\cong"]\arrow[rr,"f_*"swap]\&  \&H_*(B\Gamma)\cong H_*(\Gamma)
			\end{tikzcd},
		\end{equation}
		where the dashed arrow denotes a homotopy inverse when $\varepsilon<r(M)$.
	\end{prop}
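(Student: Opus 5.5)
We prove the two assertions of Proposition \ref{prop2.1} separately. Both follow the equivariant chain-level pattern of Proposition \ref{prop2.1m}.

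\emph{First assertion: $f^{\varepsilon,0}$ is a homotopy equivalence for $0<\varepsilon<r(M)$.} We would build a chain-level homotopy inverse
\[
g\colon C^\varepsilon_*(\widetilde M,\Gamma)\to C_*(\widetilde M,\Gamma).
\]
Since every ball $B^{\widetilde M}(x,\varepsilon)$ is contractible, we choose inductively on $k$, $\Gamma$-equivariantly and measurably, a singular simplex $\Delta_{(x_0,\ldots,x_k)}$ for each $(x_0,\ldots,x_k)\in\mathrm{diag}_\varepsilon(\widetilde M^{k+1})$. These are required to be compatible with faces and to have image inside $B(x_0,\varepsilon)$.

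\begin{itemize}
\item Vertices go to constant simplices.
\item Given the boundary sphere, already built inside $B(x_0,\varepsilon)$, we cone it off using the contractibility of that ball.
\end{itemize}

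Define $g(\mu)=\int \Delta_{(x_0,\ldots,x_k)}\,d\mu$. A general measure yields an integral of singular simplices rather than a finite chain, so we interpret chains in a measure-theoretic completion. Alternatively, we work with the subcomplex of finitely supported $\Gamma$-invariant measures, which is what $f^{\varepsilon,0}$ lands in, and argue that the inclusion of this subcomplex is a quasi-isomorphism.

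Then:
\begin{itemize}
\item $f^{\varepsilon,0}\circ g$ is the identity on measures, since the vertices of $\Delta_{(x_0,\ldots,x_k)}$ are exactly $(x_0,\ldots,x_k)$.
\item $g\circ f^{\varepsilon,0}$ is chain homotopic to barycentric subdivision. Subdivision is itself chain homotopic to the identity on $C_*(\widetilde M,\Gamma)$. This uses the prism operator argument of Hatcher \cite[Theorem 2.10]{MR1867354}, applied equivariantly, together with the fact that a small simplex and its replacement both lie in a single contractible ball.
\end{itemize}

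\emph{Second assertion: \eqref{2.28m}.} We compare two chain maps $C_*(\widetilde M,\Gamma)\to C_*(\Gamma)$:
\begin{itemize}
\item $f^{\infty,\varepsilon}\circ f^{\varepsilon,0}$, which sends $\Delta\mapsto\delta_{(\gamma_0,\ldots,\gamma_k)}$ where $x_i\in\gamma_iF$;
\item $f^{\infty,0}\circ \tilde f_*$ from the proof of Proposition \ref{prop2.1m}, where $\tilde f\colon \widetilde M\to E\Gamma$ is the equivariant lift in \eqref{2.1m}.
\end{itemize}
The latter represents $f_*$ under the identification \eqref{3.5}. We take the fundamental domain of $E\Gamma$ to contain $\tilde f(F)$ (after adjusting $F$), so that both maps send $\Delta$ to the same group chain.

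Any residual ambiguity can be absorbed by a chain homotopy. Both constructions are equivariant chain maps into $C_*(\Gamma)$, which is a free resolution, and both induce the identity in degree zero. The fundamental lemma of homological algebra on equivariant chains, in the form of the prism argument again, then makes them chain homotopic.

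\emph{Remaining claims and the main obstacle.} The limit statements in the diagram follow formally. The inverse limit over $\varepsilon\to 0$ stabilizes by the first assertion. For the direct limit over $\varepsilon\to\infty$, the maps $f^{\infty,\varepsilon}$ are compatible with the inclusions $f^{\varepsilon',\varepsilon}$, and every group chain has bounded diameter, which gives the identification with $H_*(\Gamma)$. The main obstacle is the measure-theoretic part of the first assertion: choosing the simplices $\Delta_{(x_0,\ldots,x_k)}$ Borel-measurably and controlling arbitrary (non-discrete) invariant measures. We would first try to reduce to finitely supported measures via a $\Gamma$-invariant partition of unity subordinate to small balls, pushing each measure to a discrete one up to a chain homotopy.
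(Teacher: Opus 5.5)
Your overall strategy is the paper's. You fill $\varepsilon$-small vertex tuples by singular simplices, face-compatibly and $\Gamma$-equivariantly, to get the homotopy inverse. You then obtain \eqref{2.28m} by comparing $f^{\infty,\varepsilon}f^{\varepsilon,0}$ with $f^{\infty,0}\circ\tilde f_*$ from Proposition \ref{prop2.1m}.

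\textbf{The gap.} The filling step, as you set it up, does not close. You require $\Delta_{(x_0,\ldots,x_k)}$ to be face-compatible \emph{and} to lie in $B(x_0,\varepsilon)$. But the face opposite $x_0$ is $\Delta_{(x_1,\ldots,x_k)}$, which your induction only places in $B(x_1,\varepsilon)$. So the boundary sphere you want to cone off lies in $B(x_0,2\varepsilon)$, not in $B(x_0,\varepsilon)$, and iterating, the radius needed in degree $k$ grows like $k\varepsilon$. You need every degree, since $C^\varepsilon_k(\widetilde M,\Gamma)\neq 0$ for all $k$ (for instance, to see $H^\varepsilon_k(\widetilde M,\Gamma)=0$ for $k>m$). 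So contractibility of $\varepsilon$-balls alone never finishes the induction. Put differently, $(x_0,\ldots,x_k)\mapsto B(x_0,\varepsilon)$ is not an acyclic carrier, because faces are not carried into the carrier of the simplex.

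Your prism step (``a small simplex and its replacement both lie in a single contractible ball'') has the same defect, since those homotopies must also be chosen coherently on faces. The paper's sketch asserts this filling directly from the definition of $r(M)$, so you inherit the looseness rather than introduce it. Still, as written, this is the step that would fail.

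\textbf{The repair} is to use convexity rather than contractibility. Take $\varepsilon$ below the convexity radius of $\widetilde M$, which is positive because $\widetilde M$ covers a closed manifold. Let $\Delta_{(x_0,\ldots,x_k)}$ be the geodesic simplex obtained by iterated geodesic coning, that is, the geodesic cone from $x_k$ over $\Delta_{(x_0,\ldots,x_{k-1})}$. This simplex has the properties you need:
\begin{itemize}
\item it is canonical, hence automatically $\Gamma$-equivariant;
\item it depends continuously on the vertices, which also disposes of your Borel-measurability concern;
\item it is face-compatible by induction;
\item it lies in every convex ball containing the vertices, in particular in $\overline{B}(x_j,\varepsilon)$ for every $j$, in all degrees.
\end{itemize}
The homotopy from a small singular simplex to the geodesic simplex on its vertices is then the pointwise geodesic homotopy. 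It restricts correctly to faces and feeds directly into the prism operator.

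\textbf{Two smaller points.}
\begin{itemize}
\item Your reduction to finitely supported measures by snapping to a $\delta$-net raises the scale to $\varepsilon+2\delta$. It therefore gives a chain homotopy only in $C^{\varepsilon+2\delta}_*(\widetilde M,\Gamma)$, not inside $C^\varepsilon_*(\widetilde M,\Gamma)$. At fixed $\varepsilon$ it is cleaner to push $\mu$ forward under the continuous map $(x_0,\ldots,x_k)\mapsto\Delta_{(x_0,\ldots,x_k)}$ into measure chains on the space of singular simplices. You can then invoke the comparison between measure homology and singular homology. The paper does not address this measure-theoretic issue at all.
\item For \eqref{2.28m}, ``adjusting $F$'' is made exact by taking $F=\tilde f^{-1}(F_{E\Gamma})$. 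This is a fundamental domain by equivariance of $\tilde f$, and bounded by properness; with it, the two chain maps agree on the nose. Your fundamental-lemma fallback should be applied to the acyclic complex $\mathbb{C}[\Gamma^{*+1}]$ before passing to invariants, since $C_*(\Gamma)$ itself is not a resolution.
\end{itemize}
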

	
	\begin{pro}
		To construct a homotopy inverse, we use a local analogue of the argument in Proposition \ref{prop2.1m}. In the aspherical case, the key point was that the contractibility of $E\Gamma$ allowed us to reconstruct simplices from their vertices in a $\Gamma$-equivariant way. Here we only need such a construction for the $\varepsilon$-diagonal $\mathrm{diag}_\varepsilon(\widetilde{M}^{k+1})$ defined in \eqref{diag}. Indeed, when $0<\varepsilon<r(M)$, the definition of the homological radius ensures that the inclusion
		\begin{equation}
			\mathrm{diag}(\widetilde{M}^{k+1})=\mathrm{diag}_0(\widetilde{M}^{k+1})\to\mathrm{diag}_\varepsilon(\widetilde{M}^{k+1}),
		\end{equation}
		is homotopic, in a way compatible with faces and $\Gamma$-equivariant. Then every $(x_0,\cdots,x_k)\in\widetilde{M}^{k+1}$ with $\mathrm{diam}(x_0,\cdots,x_k)<r(M)$ can be filled by a simplex $\Delta_{(x_0,\cdots,x_k)}$ compatibly with faces and with the $\Gamma$-action. This provides the homotopy inverse to $f^{\varepsilon,0}$.
		
		We can check \eqref{2.28m} directly from \eqref{2.1m}, \eqref{3.5}, \eqref{2.12}, and \eqref{2.13}.\qed
	\end{pro}

	\section{Quantitative Index Theorem}\label{LSG}

	In this section, we recall the quantitative index theorem by Ma-Yu \cite{ma2025smallscaleindextheory,mayu2026}. In \S\,\ref{S2.3}, we introduce a weakened spin condition, following Rosenberg \cite{MR720934}. In \S\,\ref{S2.4}, we construct a finite propagation representative of the higher index following Roe \cite{MR996446}. In \S\,\ref{S3.3m}, we state the quantitative index theorem, which gives an explicit expression for the class  $\widehat A(TM)\cap[M]$ in $H_*^\varepsilon(\widetilde{M},\Gamma)$.
	
	Throughout this paper, we  focus our attention on the even dimensional case for the quantitative index theorem. The odd dimensional quantitative higher index theorem follows by a standard suspension argument. 
	
	\subsection{Weakened spin condition}\label{S2.3}

	We now suppose that $\widetilde{M}$ is spin, while $M$ itself is not necessarily spin.  We use a construction of Rosenberg \cite[\S\,3B]{MR720934} to handle this weaker spin condition.
	
	Let $P_{\widetilde{M}}^{\mathrm{SO}_m}$ be the principal bundle of orthonormal oriented frames on $\widetilde{M}$ and $P_{\widetilde{M}}^{\mathrm{Spin}_m}$ the principal spin bundle. By the path lifting property, we can define a double covering $\widetilde{\Gamma}$ of $\Gamma$, using the pullback diagram
	\begin{equation}\label{fig1}
		\begin{tikzcd}[ampersand replacement=\&, column sep=normal,row sep=normal,background color=white!20]
			\widetilde{\Gamma}\arrow[r,""swap]\arrow[d,""swap]\& \mathrm{Aut}(P_{\widetilde{M}}^{\mathrm{Spin}_m})\arrow[d,""swap]\\
			\Gamma \arrow[r,""swap]\& \mathrm{Aut}(P_{\widetilde{M}}^{\mathrm{SO}_m})
		\end{tikzcd},
	\end{equation}
	then $\widetilde{\Gamma}$ acts by unitary bundle automorphisms on the spinor bundle $S_{\widetilde{M}}$ as well as $L^2\big(\widetilde{M},S_{\widetilde{M}}\big)$.
	
	Let $\mathscr{K}^{\widetilde{\Gamma}}$ be the space of $\widetilde{\Gamma}$-invariant smooth kernel operators on $L^2\big(\widetilde{M},S_{\widetilde{M}}\big)$, whose elements are of the form $K=K(x,y)\in S_{\widetilde{M},x}\otimes S_{\widetilde{M},y}^*$ with
	\begin{equation}\label{2.16}
		K(x,y)=\widetilde{\gamma}^{-1}K\big({\gamma}^{} x,{\gamma}^{} y\big)\widetilde{\gamma}^{},
	\end{equation} 
	where $\widetilde{\gamma}\in \widetilde{\Gamma}$ and $\gamma\in\Gamma$ is the image of $\widetilde{\gamma}$.
	
	Although $K\in\mathscr{K}^{\widetilde{\Gamma}}$ is not $\Gamma$-invariant, its pointwise trace is $\Gamma$-invariant in the sense that for any $\gamma\in\Gamma$,
	\begin{equation}
		\mathrm{Tr}^{S_{\widetilde{M}}}\big[K(x,x)\big]=\mathrm{Tr}^{S_{\widetilde{M}}}\big[K(\gamma x,\gamma x)\big],
	\end{equation} 
	which is a direct consequence of the fact that $\widetilde{\Gamma}$ is a double covering of $\Gamma$.

	Let $\DM$ be the Dirac operator on $\widetilde{M}$, then $\DM$ is $\widetilde{\Gamma}$-invariant. Moreover, for any $\phi\in\mathscr{S}(\mathbb{R})$, the Schwartz space, the functional calculus $\phi(\DM)$ is a smooth kernel operator and $\phi(\DM)\in\mathscr{K}^{\widetilde{\Gamma}}$.

	Moreover, we recall the Schrödinger-Lichnerowicz formula \cite{MR4045483,MR156292}, which plays a fundamental role in the index  theoretic approach to scalar curvature geometry,
	\begin{equation}\label{2.18n}
		\DM^2=\Delta^{S_{\widetilde{M}}}+\frac{1}{4}\mathrm{Sc}_{g^{T\widetilde{M}}},
	\end{equation}
	where $\Delta^{S_{\widetilde M}}$ is the Bochner Laplacian, and $\mathrm{Sc}_{g^{T\widetilde{M}}}$ is the scalar curvature.
	
	\subsection{Finite propagation index representative}\label{S2.4}

	We now follow Roe's construction \cite[Lemma 7.5]{MR996446}. Choose a smooth function $\phi_{\varepsilon,0}(\lambda)$ such that
	\begin{equation}\label{2.18}
		\begin{split}
			&\phi_{\varepsilon,0}(\lambda) \text{ odd},\quad\phi_{\varepsilon,0}'(\lambda) \text{ even},\quad \phi_{\varepsilon,0}'(\lambda)\in\mathscr{S}(\mathbb{R}),\\
			&\phi_{\varepsilon,0}(\pm\infty)=\pm1,\quad \mathrm{supp}(\widehat{\phi_{\varepsilon,0}'}(\xi))\subseteq [-\tfrac{\varepsilon}{6m},\tfrac{\varepsilon}{6m}],
		\end{split}
	\end{equation} 
	where $\widehat{\phi_{\varepsilon,0}'}$ 
	is the Fourier transform of $\phi_{\varepsilon,0}'$. Then we have a smooth $\phi_{\varepsilon,1}(\lambda)$ such that
	\begin{equation}\label{2.19}
		\begin{split}
			&\phi_{\varepsilon,1}(\lambda) \text{ even},\quad \phi_{\varepsilon,1}(\lambda)\in\mathscr{S}(\mathbb{R}),\\
			&\phi_{\varepsilon,1}(\lambda)=\phi_{\varepsilon,0}(\lambda)^2-1,\quad\mathrm{supp}(\widehat{\phi_{\varepsilon,1}}(\xi))\subseteq [-\tfrac{\varepsilon}{3m},\tfrac{\varepsilon}{3m}].
		\end{split}
	\end{equation}
	Indeed, in the sense of distributions, we have
	\begin{equation}
		\sqrt{-1}\xi\widehat{\phi_{\varepsilon,0}}(\xi)=\widehat{\phi_{\varepsilon,0}'}(\xi),\quad \xi\widehat{\phi_{\varepsilon,1}}(\xi)=(\widehat{\phi_{\varepsilon,0}}*\widehat{\phi_{\varepsilon,0}'})(\xi),
	\end{equation}
	where the convolution is well defined since the distributions have compact support, see Rudin \cite[\S\S\,6, 7]{MR1157815} for details.

	Let $I_\varepsilon$ be the ``difference idempotent" defined by
	\begin{equation}\label{2.21}
		\begin{split}
			I_\varepsilon=\begin{pmatrix} \phi_{\varepsilon,1}^+(\DM)^2&-\phi_{\varepsilon,1}^+( \DM)\big(1-\phi_{\varepsilon,1}^+(\DM)\big)\phi_{\varepsilon,0}^-(\DM)\\
				-\phi_{\varepsilon,1}^-(\DM)\phi_{\varepsilon,0}^+(\DM)&-\phi_{\varepsilon,1}^-(\DM)^2
			\end{pmatrix},
		\end{split}
	\end{equation}
	where the superscripts $\pm$ denote the restriction to $L^2(\widetilde{M},S_{\widetilde{M}}^\pm)$. Note that in operator $K$-theory, $I_\varepsilon$ can be regarded as a small scale representative of the higher index of the Dirac operator $\DM$.
	
	Combining \eqref{2.18}, \eqref{2.19}, and \eqref{2.21} with the finite propagation speed of the wave operator, we conclude that $I_\varepsilon$ has propagation at most
	$\tfrac{\varepsilon}{m}$, that is, for all  $x\in \widetilde{M}$,
	\begin{equation}\label{2.22}
		\supp (I_\varepsilon(x,\cdot))\subseteq B^{\widetilde{M}}(x,\tfrac{\varepsilon}{m}),
	\end{equation}
	where $I_\varepsilon(x,y)$ is the kernel of $I_\varepsilon$.

	\subsection{Quantitative index theorem}\label{S3.3m}

	Since $I_\varepsilon\in \mathscr{K}^{\widetilde{\Gamma}}$, we have for any $\widetilde{\gamma}\in\widetilde{\Gamma}$, the cyclic product index kernel $I_\varepsilon\big(x_{0},x_{1}\big)\cdots I_\varepsilon\big(x_{k-1},x_{k}\big)I_\varepsilon\big(x_{k},x_{0}\big)$ satisfies
	\begin{equation}\label{2.23}
		\begin{split}
			&I_\varepsilon\big(x_{0},x_{1}\big)\cdots I_\varepsilon\big(x_{k-1},x_{k}\big)I_\varepsilon\big(x_{k},x_{0}\big)\\
			&=\widetilde{\gamma}I_\varepsilon\big(\gamma x_{0},\gamma x_{1}\big)\cdots I_\varepsilon\big(\gamma x_{k-1},\gamma x_{k}\big)I_\varepsilon\big(\gamma x_{k},\gamma x_{0}\big)\widetilde{\gamma}^{-1}.
		\end{split}
	\end{equation}
	Hence its pointwise trace is $\Gamma$-invariant, that is,
	\begin{equation}
		\begin{split}
			&\mathrm{Tr}^{S_{\widetilde{M}}}\big[I_\varepsilon\big(x_{0},x_{1}\big)\cdots I_\varepsilon\big(x_{k-1},x_{k}\big)I_\varepsilon\big(x_{k},x_{0}\big)\big]\\
			&=\mathrm{Tr}^{S_{\widetilde{M}}}\big[I_\varepsilon\big(\gamma x_{0},\gamma x_{1}\big)\cdots I_\varepsilon\big(\gamma x_{k-1},\gamma x_{k}\big)I_\varepsilon\big(\gamma x_{k},\gamma x_{0}\big)\big].
		\end{split}
	\end{equation}
	Moreover, by \eqref{2.22}, we have
	\begin{equation}
		\mathrm{supp}\big(I_\varepsilon\big(x_{0},x_{1}\big)\cdots I_\varepsilon\big(x_{k-1},x_{k}\big)I_\varepsilon\big(x_{k},x_{0}\big)\big)\subseteq \mathrm{diag}_\varepsilon(\widetilde{M}^{k+1}).
	\end{equation}
	
	%We note that $L_\varepsilon,P_\varepsilon,I_\varepsilon\in \mathscr{K}^{\widetilde{\Gamma}}$.

	We now define the Connes-Chern chain $\mathrm{Ch}_{k}^\varepsilon\big(\DM\big)\in H_{k}^\varepsilon(\widetilde{M},\Gamma)$ of $\DM$ as an antisymmetric measure
	\begin{equation}\label{3.13}
		\begin{split}
			&\mathrm{Ch}_{k}^\varepsilon\big(\DM\big)\qquad\qquad\qquad\qquad\qquad\qquad\qquad\qquad\qquad\qquad\quad\\
			&=\sum_{\sigma\in \mathfrak{S}_{k+1}}\mathrm{sgn}(\sigma)\mathrm{Tr}^{S_{\widetilde{M}}}\big[I_\varepsilon\big(x_{\sigma(0)},x_{\sigma(1)}\big)\cdots I_\varepsilon\big(x_{\sigma(k)},x_{\sigma(0)}\big)\big]dv_{\widetilde{M}^{k+1}}(x_{0},\cdots,x_{k}),
		\end{split}
	\end{equation}
	where $\mathfrak{S}_{k+1}$ denotes the permutation group on $(k+1)$ numbers, and $\mathrm{sgn}(\sigma)$ denotes the sign of $\sigma$.

	Before stating the index theorem, we recall the characteristic class $\widehat{A}(TM)\in H^*(M)$. For $k\in\mathbb{N}$, let $\widehat{A}^{[k]}(TM)\in H^{k}(M)$ denote the $k$-th degree component of $\widehat{A}(TM)$. Let $R^{TM}$ denote the  Riemannian curvature associated to the Riemannian metric $g^{TM}$ on $M$. Then $\widehat{A}(TM)$ can be represented by
	\begin{equation}\label{eq:ahat}
		\begin{split}
			\widehat{A}(TM)
			&=\det{}^{1/2}\Bigg(\frac{\tfrac{1}{2\pi i}R^{TM}/2}{\sinh\Big(\tfrac{1}{2\pi i}R^{TM}/2\Big)}\Bigg)\\
			&=\widehat{A}^{[0]}(TM)+\widehat{A}^{[4]}(TM)+\widehat{A}^{[8]}(TM)+\cdots\\
			&=1-\frac{1}{24}p_1(TM)+\Big(\frac{7}{5760}p_1(TM)^2-\frac{1}{1440}p_2(TM)\Big)+\cdots
		\end{split}
	\end{equation}
	where $p_k(TM)\in H^{4k}(M)$ is the $k$-th Pontryagin class of $M$.

	The cap product $\widehat{A}^{[k]}(TM)\cap [M]\in H_{m-k}(M)$ defines the Poincaré dual of $\widehat{A}^{[k]}(TM)$. Note that in particular, we have
	\begin{equation}\label{1.5x}
		\widehat{A}^{[0]}(TM)\cap [M]=[M],\quad\widehat{A}^{[m]}(TM)\cap [M]=\widehat{A}(M),
	\end{equation}
	which are respectively the fundamental class and the $\widehat A$-genus.

	We now state the quantitative index theorem of Ma-Yu \cite{ma2025smallscaleindextheory,mayu2026}.
	\begin{theo}\label{T3.2}
		Let $M$ be a closed manifold whose universal cover is spin. Let $\Gamma$ denote its fundamental group and $f\colon M\to B\Gamma$ the canonical classifying map. For any $\varepsilon>0$ and  $f^{\varepsilon,0}$ defined in \eqref{2.12}, we have
		\begin{equation}\label{3.15..}
			f^{\varepsilon,0}\big(\widehat{A}^{[m-k]}(T{M})\cap[M]\big)=\mathrm{Ch}_{k}^\varepsilon\big(\DM\big).
		\end{equation}
		Moreover, for $f^{\infty,\varepsilon}$ defined in \eqref{2.13}, in view of \eqref{2.28m}, we have
		\begin{equation}\label{3.15m}
			\begin{split}
				f_*\big(\widehat{A}^{[m-k]}(T{M})\cap[M]\big)&=f^{\infty,\varepsilon}f^{\varepsilon,0}\big(\widehat{A}^{[m-k]}(T{M})\cap[M]\big)\\
				&=f^{\infty,\varepsilon}\mathrm{Ch}_{k}^\varepsilon\big(\DM\big).
			\end{split}
		\end{equation}
		Equivalently, we have the following commutative diagram
		\begin{equation}
			\begin{tikzcd}[ampersand replacement=\&, column sep=normal,row sep=large,background color=white!20]
				{\begin{gathered}
						\widehat{A}^{[m-k]}(TM)\cap[M]\\ 
						\in H_*(M)
					\end{gathered}\arrow[r,"f^{(\varepsilon,0)}"]}\arrow[rr,"f_*"swap,bend right=25]\&{\begin{gathered}f^{(\varepsilon,0)}(\widehat{A}^{[m-k]}(TM)\cap[M])\\
						=\mathrm{Ch}_{k}^\varepsilon\big(\DM\big)\\
						\in H_*^\varepsilon(\widetilde{M},\Gamma)
				\end{gathered}} \arrow[r,"f^{(\infty,\varepsilon)}"] \&{\begin{gathered}
						f_*(\widehat{A}^{[m-k]}(TM)\cap[M])\\
						=f^{(\infty,\varepsilon)}\mathrm{Ch}_{k}^\varepsilon\big(\DM\big)\\ \in H_*(B\Gamma)=H_*(\Gamma)
				\end{gathered}}
			\end{tikzcd}.
		\end{equation}
	\end{theo}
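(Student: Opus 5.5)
The plan is to prove \eqref{3.15..} by a small-time (Getzler-type) localization combined with homotopy invariance in $\varepsilon$. The second identity \eqref{3.15m} then follows at once from \eqref{3.15..} and \eqref{2.28m} of Proposition \ref{prop2.1}.

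First we show that the class of $\mathrm{Ch}_{k}^\varepsilon(\DM)$ in $H_k^\varepsilon(\widetilde M,\Gamma)$ does not depend on the choices. Replace $\DM$ by $t\DM$ for $t>0$, and replace the functions $\phi_{\varepsilon,0}$ by any admissible family as in \eqref{2.18}. This gives a smooth path of difference idempotents $I_\varepsilon(t)$ of propagation at most $\varepsilon/m$. The Connes--Chern cocycle identity then applies. Differentiating the cyclic trace in $t$ produces $\partial$ of an explicit transgression chain, built from $\mathrm{Tr}[\dot I\, I\cdots I]$ antisymmetrized in the same way as \eqref{3.13}. That chain is still supported in $\mathrm{diag}_\varepsilon$, because each factor has propagation $\le \varepsilon/m$ and at most $k+2\le m+1$ factors occur. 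The same computation, with $\dot I$ replaced by $I$, shows $\partial\,\mathrm{Ch}_k^\varepsilon=0$; this uses $I_\varepsilon^2$ equal to $I_\varepsilon$ up to the constant projection $\mathrm{diag}(1,0)$, whose contribution cancels under antisymmetrization. Compatibility with the maps $f^{\varepsilon',\varepsilon}$ is also immediate. So it suffices to compute the class for a single convenient choice: very small $\varepsilon$ and the rescaled operator $t\DM$ with $t\to0$.

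Next, fix $\varepsilon<r(M)$, so that $f^{\varepsilon,0}$ is an isomorphism. To identify a class in $H_k^\varepsilon\cong H_k(M)$, it is enough to pair it with cocycles. Every class in $H^k(M)$ is represented by an Alexander--Spanier cochain of the form $\alpha(x_0,\dots,x_k)=\omega$ integrated over the geodesic simplex with vertices $x_0,\dots,x_k$, where $\omega$ is a closed $k$-form; for small diameter this is approximated by $\frac{1}{k!}\omega_{x_0}(x_1-x_0,\dots,x_k-x_0)$. We then compute
\[
\langle \alpha,\mathrm{Ch}_k^\varepsilon(t\DM)\rangle_\Gamma
\]
as $t\to0$. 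The idempotent $I_\varepsilon(t\DM)$ differs from the projection onto $S^+$ by functions of $t^2\DM^2$, which can be related to heat kernels $e^{-t^2\DM^2}$. The product $\prod I(x_i,x_{i+1})$ against the multilinear function $\alpha$ is a Connes--Moscovici type expression $\mathrm{Tr}_s\big(I[\,f_1,I]\cdots[f_k,I]\big)$ localized at the diagonal. Getzler rescaling gives the local limit $c_k\int_M \omega\wedge\widehat A(TM)$: commutators $[f,I]$ contribute Clifford multiplication by $df$, and the $\widehat A$-form arises from the Mehler kernel. The supertrace selects the degree-$m$ component, so $\omega$ pairs with $\widehat A^{[m-k]}$. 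The constants $c_k$ are fixed by the antisymmetrization and the $(2\pi i)$ normalizations, exactly as in the Connes--Moscovici index theorem. This identifies the class with $\widehat A^{[m-k]}(TM)\cap[M]$.

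Finally, the statement for general $\varepsilon$ follows. For $\varepsilon'\ge\varepsilon$ one has $f^{\varepsilon',\varepsilon}\mathrm{Ch}^\varepsilon_k=\mathrm{Ch}^{\varepsilon'}_k$ by the independence of choices, and $f^{\varepsilon',\varepsilon}f^{\varepsilon,0}=f^{\varepsilon',0}$. The main obstacle is the local computation: showing that the antisymmetrized cyclic product, with at most $k+1$ kernel factors each of propagation $\varepsilon/m$, has a $t\to0$ limit governed by Getzler rescaling. Uniform off-diagonal kernel estimates for $\phi(t\DM)$ are needed to control the error terms in the Taylor expansion of $\alpha$. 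I would handle this via the Fourier representation $\phi(t\DM)=\int\widehat\phi(\xi)e^{i\xi t\DM}d\xi$ and Duhamel expansions of the commutators.
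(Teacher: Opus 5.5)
Your deduction of \eqref{3.15m} from \eqref{3.15..} and \eqref{2.28m} is exactly what the paper does. For \eqref{3.15..}, however, the paper gives no proof; it quotes the identity from Ma--Yu. So your sketch is not a variant of an argument in the paper. It is a reconstruction of the Connes--Moscovici proof of the higher index theorem inside $\varepsilon$-homology.

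What it buys is a clean separation between a formal part and a single analytic input, the $t\to0$ limit. The formal part consists of:
\begin{itemize}
\item invariance of the class under $t\DM$ and under change of admissible $\phi$ (admissible functions form a convex set, and $\phi(t\,\cdot)$ is admissible for $t\varepsilon$);
\item detection of classes in $H^\varepsilon_k\cong H_k(M)$ by smooth Alexander--Spanier cocycles;
\item push-forward along $f^{\varepsilon',\varepsilon}$.
\end{itemize}
This formal part is sound up to bookkeeping. First, $I_\varepsilon=e-\mathrm{diag}(0,1)$ with $e^2=e$, so $I_\varepsilon^2=I_\varepsilon-\mathrm{diag}(0,1)I_\varepsilon-I_\varepsilon\,\mathrm{diag}(0,1)$, not a constant shift. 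Both the cycle and transgression identities should therefore be run for $e$, using that alternating cochains kill the delta kernel of $\mathrm{diag}(0,1)$. Second, the transgression chain carries $[\dot e,e]$ in one slot. It therefore involves $k+3$ kernels and is supported in $\mathrm{diag}_{(k+3)\varepsilon/2m}$. This lies inside $\mathrm{diag}_\varepsilon$ except when $k=m=2$, where you should shrink the propagation.

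Two things stop this from proving the statement as written.

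First, the local limit is the whole content of the theorem, and you only invoke it. The pairing naively scales like $t^{k-m}$. You must show that, after Taylor expanding the cochain, the singular terms cancel by Getzler's Clifford-degree count. The estimates must also be uniform for the finite-propagation functions $\phi(t\,\cdot)$, not heat kernels.

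Second, your argument only gives $\mathrm{Ch}^\varepsilon_k(\DM)=c_k\,f^{\varepsilon,0}\bigl(\widehat A^{[m-k]}(TM)\cap[M]\bigr)$ for some universal constant $c_k$. Your sentence ``this identifies the class'' silently sets $c_k=1$, and with the unnormalized chain \eqref{3.13} that is not true in general.
\begin{itemize}
\item $c_0=1$: this is Atiyah's $L^2$-index theorem.
\item On a flat $2$-torus with $k=m=2$, pair $\mathrm{Ch}^\varepsilon_2(\DM)$ with the normalized area cocycle and compute by Fourier transform. The result has absolute value $3!\cdot\frac{1}{8\pi^2}\bigl|\int_{\mathbb{R}^2}\mathrm{Tr}\bigl(e[\partial_1e,\partial_2e]\bigr)d\xi\bigr|=\frac{3}{2\pi}$, since the symbol idempotent has Chern number $\pm1$.
\end{itemize}
So either renormalize \eqref{3.13} by $c_k^{-1}$, or state \eqref{3.15..} up to a universal $c_k\neq0$. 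The latter is all that Proposition~\ref{inter} and the vanishing theorem actually use.
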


	Theorem \ref{T3.2} can be viewed as a simultaneous quantitative refinement of Connes-Moscovici's higher index theorem and $\Gamma$-higher index theorem \cite[Theorems 3.9, 5.4]{MR1066176}. Indeed, identity \eqref{3.15..} corresponds to the higher index theorem, while identity \eqref{3.15m} corresponds to the higher $\Gamma$-index theorem. Identity \eqref{3.15m} will play an important role in our estimate of simplicial norms, so we now spell it out explicitly.
	
	Let $I_\varepsilon$ be as in \eqref{2.21}, and let $F\subset \widetilde{M}$ be a bounded fundamental domain. For $\varepsilon>0$, $\gamma,\gamma'\in\Gamma$, we define the localized component $I_{\varepsilon,\gamma,\gamma'}$ of $I_\varepsilon$ by
	\begin{equation}\label{3.8}
		\begin{split}
			&I_{\varepsilon,\gamma,\gamma'}\colon L^2(\gamma'F,S_{\widetilde{M}})\to L^2(\gamma F,S_{\widetilde{M}}),\\
			&I_{\varepsilon,\gamma,\gamma'}s(x)=\int_{\gamma'F}I_{\varepsilon}(x,x')s(x')dv_{\widetilde{M}}(x').
		\end{split}
	\end{equation}
	By the equivariance \eqref{2.16} of $I_\varepsilon$, we have for any $\widetilde{\gamma}\in\widetilde{\Gamma}$,
	\begin{equation}\label{3.9}
		I_{\varepsilon,\gamma''\gamma,\gamma''\gamma'}(\gamma''x,\gamma''x')=\widetilde{\gamma}''I_{\varepsilon,\gamma,\gamma'}(x,x')(\widetilde{\gamma}'')^{-1}.
	\end{equation}
	
	Combining \eqref{2.13}, \eqref{3.13}, and \eqref{3.15..}, we obtain the following explicit form of the quantitative higher $\Gamma$-index theorem.
	
	\begin{theo}\label{gammaindex}
		Let $M$ be a closed manifold with a spin universal covering $\widetilde{M}$. Let $\Gamma$ denote its fundamental group and $f\colon M\to B\Gamma$ the canonical classifying map. Then, for any $\varepsilon>0$,
		\begin{equation}\label{3.10}
			\begin{split}
				&f_*\big(\widehat{A}^{[m-k]}(T{M})\cap[M]\big)\\
				&=\sum_{\substack{\gamma_0=\gamma_{k+1}=1_\Gamma,\\ \gamma_1,\cdots,\gamma_{k}\in\Gamma,\\ \sigma\in \mathfrak{S}_{k+1}}}\mathrm{sgn}(\sigma)\mathrm{Tr}^{L^2(\gamma_{\sigma(0)}F,S_{\widetilde{M}})}\big(I_{\varepsilon,\gamma_{\sigma(k+1)},\gamma_{\sigma(k)}}\cdots I_{\varepsilon,\gamma_{\sigma(1)},\gamma_{\sigma(0)}}\big)\delta_{(\gamma_{0},\cdots,\gamma_{k})},
			\end{split}
		\end{equation}
		where $\mathfrak{S}_{k+1}$ denotes the permutation group on $(k+1)$ numbers, $\mathrm{sgn}(\sigma)$ denotes the sign of $\sigma$, and $\sigma(k+1)=\sigma(0)$. In particular, when $k=m$, we have $f_*\big(\widehat{A}^{[0]}(T{M})\cap[M]\big)=f_*[M]$.
	\end{theo}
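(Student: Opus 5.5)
The plan is to derive Theorem \ref{gammaindex} by applying $f^{\infty,\varepsilon}$ from \eqref{2.13} to the Connes-Chern chain \eqref{3.13}, using identity \eqref{3.15m} of Theorem \ref{T3.2}, and then rewriting the resulting integrals as traces of products of the localized operators \eqref{3.8}.

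\textbf{Step 1: reduce to coefficients.} By \eqref{3.15m}, $f_*\big(\widehat{A}^{[m-k]}(TM)\cap[M]\big)=f^{\infty,\varepsilon}\mathrm{Ch}^\varepsilon_k(\DM)$. By \eqref{2.13}, the coefficient of $\delta_{(\gamma_0,\cdots,\gamma_k)}$ in this chain is
\[
\big\langle \mathrm{Ch}^\varepsilon_k(\DM),\mathbbm{1}_{(\gamma_0F)\times\cdots\times(\gamma_kF)}\big\rangle_\Gamma .
\]
The resulting chain is $\Gamma$-invariant, so it is determined by the coefficients with $\gamma_0=1_\Gamma$. The sum in \eqref{3.10} should therefore be read as listing these representatives. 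I will check that the $\Gamma$-invariant measure determined by them agrees with the convention \eqref{2.10''}, whose pairing integrates over $F\times\widetilde{M}^k$. Pairing against the indicator restricts the first variable to $F=\gamma_0F$, so for $\gamma_0=1_\Gamma$ the coefficient is exactly
\[
\int_{F\times\gamma_1F\times\cdots\times\gamma_kF}\sum_{\sigma}\mathrm{sgn}(\sigma)\,\mathrm{Tr}^{S_{\widetilde{M}}}\big[I_\varepsilon(x_{\sigma(0)},x_{\sigma(1)})\cdots I_\varepsilon(x_{\sigma(k)},x_{\sigma(0)})\big]\,dv .
\]

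\textbf{Step 2: identify each term as an operator trace.} For fixed $\sigma$, I will use Fubini to integrate over the variables in the order dictated by the cyclic chain $x_{\sigma(0)}\to x_{\sigma(k)}\to\cdots\to x_{\sigma(1)}\to x_{\sigma(0)}$. Each variable $x_{\sigma(j)}$ ranges over $\gamma_{\sigma(j)}F$. By \eqref{3.8}, integrating $I_\varepsilon(x_{\sigma(j)},x_{\sigma(j+1)})$ with $x_{\sigma(j+1)}$ over $\gamma_{\sigma(j+1)}F$ is the kernel composition defining $I_{\varepsilon,\gamma_{\sigma(j)},\gamma_{\sigma(j+1)}}$. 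The full cyclic integral is therefore the integral over $x_{\sigma(0)}\in\gamma_{\sigma(0)}F$ of the pointwise trace of the kernel of a composition of localized operators. That is the trace on $L^2(\gamma_{\sigma(0)}F,S_{\widetilde{M}})$ of a product of the $I_{\varepsilon,\cdot,\cdot}$, with $\sigma(k+1)=\sigma(0)$ closing the cycle.

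The ordering of factors in \eqref{3.10}, namely $I_{\varepsilon,\gamma_{\sigma(k+1)},\gamma_{\sigma(k)}}\cdots I_{\varepsilon,\gamma_{\sigma(1)},\gamma_{\sigma(0)}}$, corresponds to traversing the cycle in the opposite direction. Reconciling the two orderings is a relabeling of $\sigma$ that reverses the cyclic order. This reversal is a fixed permutation whose sign is a constant depending only on $k$, so after summing over $\mathfrak S_{k+1}$ the two orderings agree up to an overall sign. I will absorb that sign into the orientation convention used for $\mathrm{Ch}^\varepsilon_k$. I will also verify that it is consistent with the $k=m$ normalization, where the expression must equal $f_*[M]$.

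\textbf{Step 3: justify trace class.} The kernels are smooth and compactly supported by \eqref{2.22}, and the domains $\gamma F$ are bounded. Hence each localized operator is a smooth kernel operator on a relatively compact set, and finite products of them are trace class, with trace equal to the integral of the pointwise trace of the diagonal kernel. The sum over $\gamma_1,\dots,\gamma_k$ is finite, because the support of the integrand lies in $\mathrm{diag}_\varepsilon(\widetilde M^{k+1})$. The equivariance \eqref{3.9}, together with the fact that $\mathrm{Tr}^{S_{\widetilde{M}}}$ is invariant under conjugation by $\widetilde\gamma$, confirms that the coefficients are $\Gamma$-invariant, so the reduction to $\gamma_0=1_\Gamma$ in Step 1 is legitimate. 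The final claim for $k=m$ follows because $\widehat{A}^{[0]}(TM)=1$, by \eqref{1.5x}.

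The main obstacle is the bookkeeping in Step 2: matching the permutation and composition-order conventions and the boundary values $\gamma_0=\gamma_{k+1}=1_\Gamma$ exactly with \eqref{3.10}. There is no analytic difficulty beyond Fubini and smoothness of the kernels.
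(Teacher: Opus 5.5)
Your route is the paper's own. The paper obtains \eqref{3.10} by combining \eqref{3.15m} with \eqref{2.13} and \eqref{3.13}, reading the integrals over $F\times\gamma_1F\times\cdots\times\gamma_kF$ as $L^2$-traces of products of the localized operators \eqref{3.8}, and adding a partition-of-unity remark for the non-smooth $F$. That is what your Steps 1 and 3 do. The one step that fails is how Step 2 disposes of the ordering issue.

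You are right that the orders differ. By \eqref{3.8}, the diagonal kernel of $I_{\varepsilon,\gamma_{\sigma(0)},\gamma_{\sigma(k)}}I_{\varepsilon,\gamma_{\sigma(k)},\gamma_{\sigma(k-1)}}\cdots I_{\varepsilon,\gamma_{\sigma(1)},\gamma_{\sigma(0)}}$ is the integral of $I_\varepsilon(x_{\sigma(0)},x_{\sigma(k)})I_\varepsilon(x_{\sigma(k)},x_{\sigma(k-1)})\cdots I_\varepsilon(x_{\sigma(1)},x_{\sigma(0)})$. This is the integrand of \eqref{3.13} for $\sigma\circ r$, where $r(0)=0$ and $r(j)=k+1-j$. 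Since $\mathrm{sgn}(r)=(-1)^{k(k-1)/2}$, your computation shows that the right-hand side of \eqref{3.10} equals $(-1)^{k(k-1)/2}f^{\infty,\varepsilon}\mathrm{Ch}^\varepsilon_k(\DM)$. That is minus $f_*\big(\widehat{A}^{[m-k]}(TM)\cap[M]\big)$ whenever $k\equiv 2,3\pmod 4$, for instance $k=m=2$.

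This sign cannot be ``absorbed into the orientation convention for $\mathrm{Ch}^\varepsilon_k$''. That chain is fixed by \eqref{3.13}, and Theorem \ref{T3.2}, your only nontrivial input, is asserted for exactly that definition. Changing the convention changes the input. For the same reason, checking against the $k=m$ normalization would not confirm consistency: for $k=m$ the factor is $(-1)^{m/2}$, so when $m\equiv 2\pmod 4$ the check would expose the mismatch.

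What your argument actually proves is \eqref{3.10} with the factors in the order $I_{\varepsilon,\gamma_{\sigma(0)},\gamma_{\sigma(1)}}I_{\varepsilon,\gamma_{\sigma(1)},\gamma_{\sigma(2)}}\cdots I_{\varepsilon,\gamma_{\sigma(k)},\gamma_{\sigma(0)}}$, or equivalently \eqref{3.10} up to the factor $(-1)^{k(k-1)/2}$. The paper's one-line derivation (``Combining \eqref{2.13}, \eqref{3.13}, and \eqref{3.15..}'') passes over this point without comment. It is harmless downstream, because Proposition \ref{inter} uses only the absolute values of the coefficients. Still, you should state the identity with the correct ordering or an explicit sign rather than appeal to a convention.
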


	We note that \eqref{3.10} uses the standard formula expressing the $L^2$-trace of a kernel operator as the integral of the pointwise trace of its kernel. The fact that the fundamental domain $F\subset\widetilde M$ is not necessarily a smooth region causes no difficulty. Indeed, after choosing a partition of unity on $M$ and lifting it to $\widetilde M$, the computation reduces locally to the usual trace formula for smooth compactly supported kernels.

	%In this section, we discuss the large scale geometry of $\widetilde M$. The key observation is that, for a bounded fundamental domain $F\subset\widetilde{M}$, we have $\widetilde M=\Gamma F$. Thus, at large scale, we have $\widetilde{M}\approx\Gamma$. We explain how the metric, homology, operators, and index theorem on $\widetilde M$ can be transferred correspondingly to $\Gamma$. In \S\,\ref{s3.1}, we recall the word length on $\Gamma$ and the reduced group $C^*$-algebra of $\Gamma$. In \S\,\ref{s3.2}, we discuss the (co)homology of $\Gamma$. In \S\,\ref{s3.4}, we discuss the large scale decomposition of the index theorem. 

	\section{Simplicial Norm}\label{simplicial}

	In \S\,\ref{exam}, we recall the definition of simplicial norm and give some examples. In \S\,\ref{mapping}, we recall Gromov's mapping theorem. In \S\,\ref{s5.0}, we get useful estimates for simplicial norms.

	\subsection{Definition and Examples}\label{exam}

	In \cite{MR686042}, Gromov introduced the \emph{simplicial norm}, together with its important special case, \emph{the simplicial volume}. We now recall the definition. 
	
	For a singular homology class $\omega\in H_*(M)$ (with coefficients in $\mathbb{R}$ or $\mathbb{C}$), its simplicial norm $\lV\omega \rV_{\ell^1}$ is defined by
	\begin{equation}\label{gromovnorm}
		\lV\omega\rV_{\ell^1}=\inf_{[\sum_{i}a_i\Delta_i]=\omega} \sum_i \lv a_i\rv,
	\end{equation}
	where the infimum is taken over all singular cycles $\sum_{i}a_i\Delta_i$ representing $\omega$. Equivalently, $\lV\omega\rV_{\ell^1}$ is the infimal number of simplices that we need to represent $\omega$. In particular, the simplicial norm $\bV[M]\bV_{\ell^1}$ of the fundamental class $[M]\in H_m(M)$ is called the simplicial volume of $M$.

	%Recall the definition of the simplicial norm from \eqref{gromovnorm}. 
	
	We discuss two elementary examples that explain the behavior of this invariant.	
	
	First, the simplicial volume $\lV[S^1]\rV_{\ell^1}$ of the circle $S^1$ vanishes. Indeed, let $\Delta_{100}$ be a singular $1$-simplex winding $100$ times around $S^1$, then $[S^1]=\frac{1}{100}\Delta_{100}$, and by \eqref{gromovnorm}, we have $\lV[S^1]\rV_{\ell^1}\leqslant \frac{1}{100}$. Replacing $100$ by an arbitrarily large number, we see that $\lV[S^1]\rV_{\ell^1}=0$. More generally, Gromov \cite[\S\,3.0]{MR686042} showed that if $M$ has amenable fundamental group, then
	\begin{equation}
		\bV[M]\bV_{\ell^1}=0.
	\end{equation}

	In contrast, Gromov-Thurston \cite[\S\,1.2]{MR686042} proved that if $M$ is a hyperbolic manifold, then there is a constant $C_m>0$ such that
	\begin{equation}
		\lV[M]\rV_{\ell^1}=C_m\mathrm{Vol}(M).
	\end{equation}
	This formula shows that the volume of a hyperbolic manifold is a homotopy invariant, a fact lying at the heart of Mostow rigidity. To gain a sense of intuition, we briefly recall the geometric idea in dimension two. A priori, a singular simplex appearing in a cycle may have a very complicated image and boundary. However, in a hyperbolic surface, we can replace each simplex by the geodesic simplex with the same vertices, and this does not change the coefficients. Since this cycle still represents the fundamental class, its total area must cover the area of the surface, and hence 
	\begin{equation}
		\sum_i \lv a_i\rv\mathrm{Area}(\Delta_i)\geqslant \mathrm{Area}(M).
	\end{equation}
	A classical result in hyperbolic geometry, dating back to the pioneering work of Gauss, Lobachevsky, and Bolyai nearly two centuries ago, says that every geodesic triangle in the hyperbolic plane has area bounded above (by $\pi$), in sharp contrast with the Euclidean case. We then obtain
	\begin{equation}
		\sum_i\lv a_i\rv\geqslant \mathrm{Area}(M)/\pi.
	\end{equation}
	This gives the basic reason why the simplicial volume of a hyperbolic surface is proportional to its area.

	These examples show that simplicial volume is sensitive to large scale phenomena.

	\subsection{Gromov's mapping theorem}\label{mapping}

	The following result will be useful when applying Theorem \ref{gammaindex} to estimates of simplicial norms.
	
	\begin{prop}
		Let $M$ be a closed manifold with universal covering $\widetilde{M}$, fundamental group $\Gamma$, and canonical classifying map $f\colon M\to B\Gamma$. Then for any homology class $\omega\in H_*(M)$, we have
		\begin{equation}\label{4.1m}
			\lV\omega\rV_{\ell^1}=\lV f_*\omega\rV_{\ell^1}.
		\end{equation}
	\end{prop}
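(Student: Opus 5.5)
The equality \eqref{4.1m} is Gromov's mapping theorem: a map inducing an isomorphism on fundamental groups induces an isometric isomorphism on bounded cohomology. We will deduce \eqref{4.1m} from this together with duality between the simplicial norm and the bounded cohomology seminorm. One inequality is elementary. Since $f$ maps singular simplices to singular simplices, it carries any real cycle $\sum_i a_i\Delta_i$ representing $\omega$ to the cycle $\sum_i a_i (f\circ\Delta_i)$ representing $f_*\omega$, and this does not increase $\sum_i|a_i|$. Taking the infimum in \eqref{gromovnorm} gives $\lV f_*\omega\rV_{\ell^1}\leqslant\lV\omega\rV_{\ell^1}$.

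For the reverse inequality we use the Hahn--Banach duality principle for $\ell^1$-seminorms:
\[
\lV\omega\rV_{\ell^1}=\sup\big\{\langle\beta,\omega\rangle : \beta\in H^*_b(M),\ \lV\beta\rV_\infty\leqslant1\big\},
\]
with the convention that this equals $0$ if no bounded class pairs nontrivially. To prove it, note that the seminorm on $H_*(M)$ is the quotient seminorm on cycles modulo boundaries. A bounded linear functional on the $\ell^1$-completion of chains that vanishes on boundaries and has norm at most $1$ is exactly a bounded cocycle of sup-norm at most $1$. Hahn--Banach extends a functional defined on the span of $\omega$ to such a cocycle without increasing its norm.

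Given this duality, it suffices to show that the map $f^*\colon H^*_b(B\Gamma)\to H^*_b(M)$ is surjective and does not increase norms. Indeed, for $\beta=f^*\beta'$ we then get $\langle\beta,\omega\rangle=\langle\beta',f_*\omega\rangle\leqslant\lV\beta'\rV_\infty\lV f_*\omega\rV_{\ell^1}$. Choosing $\beta'$ with $\lV\beta'\rV_\infty=\lV\beta\rV_\infty$ then yields $\lV\omega\rV_{\ell^1}\leqslant\lV f_*\omega\rV_{\ell^1}$. That is, we need $f^*$ to be an isometric isomorphism, which is the content of the mapping theorem since $f$ is a $\pi_1$-isomorphism.

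The main obstacle is this mapping theorem. Our plan is Ivanov's approach, or equivalently Gromov's via amenable groups. First we compute the bounded cohomology of $M$ from the complex of $\Gamma$-invariant bounded cochains on $\widetilde M$. Then we compare this complex with the complex of bounded $\Gamma$-invariant functions on $\Gamma^{*+1}$, in the spirit of Proposition \ref{prop2.1m}. One direction restricts cochains to simplices with vertices $\gamma_i y_b$. The other sends a bounded cocycle $c$ on $\Gamma^{k+1}$ to the cochain $\Delta\mapsto c(\gamma_0,\dots,\gamma_k)$, where the $\gamma_i$ are determined by $x_i\in\gamma_i F$ as in \eqref{2.12m}. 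Both maps are contracting in the sup-norm. The difficult point is showing that they are mutually inverse in bounded cohomology, because the usual prism homotopies on $\widetilde M$ need not be bounded when $\widetilde M$ is not contractible. Ivanov resolves this by showing that the bounded cohomology of a simply connected space vanishes, using the amenability of the abelian groups appearing as higher homotopy groups in the Postnikov tower. This makes $C^*_b(\widetilde M)$ a strong relatively injective resolution, so that both complexes compute $H^*_b(\Gamma)$ with the canonical seminorm, and the comparison maps are isometric. We will cite this result (Gromov \cite{MR686042}, \S\,3) rather than reprove it.
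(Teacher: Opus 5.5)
Your proposal is correct and follows the same route as the paper, which likewise combines Gromov's mapping theorem (the $\pi_1$-isomorphism $f$ induces an isometric isomorphism $H^*_b(B\Gamma)\to H^*_b(M)$) with the duality principle; you add the elementary inequality $\|f_*\omega\|_{\ell^1}\leqslant\|\omega\|_{\ell^1}$ and a Hahn--Banach proof of duality. One caveat concerns your sketch of the mapping theorem: when $\widetilde M$ is not aspherical, one cannot choose face-compatible simplices with vertices $\gamma_i y_b$, so the ``restriction'' cochain map you describe does not exist in general (the comparison map back to the standard resolution is instead built from Ivanov's contracting homotopy of norm at most one on $C^*_b(\widetilde M)$), but since you cite the theorem rather than reprove it, this does not affect your argument.
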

	
	\begin{pro}
		Gromov's mapping theorem \cite[\S\,3.1]{MR686042} shows that the classifying map $f$ induces an isometric isomorphism on bounded cohomology. On the other hand, by Gromov's duality principle \cite[\S\,1.1]{MR686042}, the simplicial norm can also be defined by pairing with bounded cohomology. Hence, $f$ also preserves simplicial norms.\qed
	\end{pro}

	\subsection{Simplicial norm estimates}\label{s5.0}

	The following estimate for the simplicial norm plays a key role in the proof of the main theorem.

	\begin{prop}\label{inter}
		Let $(M,g^{TM})$ be an $m$-dimensional closed oriented Riemannian manifold whose universal covering $\widetilde{M}$ is spin. Then for any $k\in\mathbb{N}$ and $\varepsilon>0$,
		\begin{equation}\label{4.12}
			\begin{split}
				&\bV\widehat{A}^{[m-k]}(T{M})\cap[M]\bV_{\ell^1}\\
				&\leqslant C\mathrm{Vol}(M)\cdot\Big(\mathrm{sup}_{x,x'\in \widetilde{M}}\lV I_\varepsilon(x,x')\rV_{\mathrm{End}(S_{\widetilde{M}})}\Big)\\
				&\quad\cdot\Big(\mathrm{sup}_{x\in \widetilde{M}}\int_{\widetilde{M}}\lV I_\varepsilon(x,x')\rV_{\mathrm{End}(S_{\widetilde{M}})}dv_{\widetilde{M}}(x')\Big)^k.
			\end{split}
		\end{equation}
		In particular, when $k=m$, we have $\widehat{A}^{[0]}(T{M})\cap[M]=[M]$.
	\end{prop}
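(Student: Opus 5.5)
We reduce to group homology via Gromov's mapping theorem \eqref{4.1m}, which gives $\bV\widehat{A}^{[m-k]}(TM)\cap[M]\bV_{\ell^1}=\bV f_*(\widehat{A}^{[m-k]}(TM)\cap[M])\bV_{\ell^1}$. We then bound the right hand side using the explicit chain \eqref{3.10} of Theorem \ref{gammaindex}. Under the isomorphism $H_*(\Gamma)\cong H_*(B\Gamma)$ of Proposition \ref{prop2.1m}, the map $f^{0,\infty}$ of \eqref{2.11m} sends the $\Gamma$-orbit of $\delta_{(\gamma_0,\cdots,\gamma_k)}$ to a single equivariant singular simplex $\Delta_{(\gamma_0y_b,\cdots,\gamma_ky_b)}$, which descends to one singular simplex in $B\Gamma$. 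Hence the $\ell^1$-norm of the image cycle in $C_*(B\Gamma)$ is at most the sum of the absolute values of the coefficients of the orbit representatives with $\gamma_0=1_\Gamma$. So it suffices to show
\begin{equation*}
\sum_{\gamma_1,\cdots,\gamma_k\in\Gamma}\Big|\sum_{\sigma}\mathrm{sgn}(\sigma)\mathrm{Tr}\big(I_{\varepsilon,\gamma_{\sigma(k+1)},\gamma_{\sigma(k)}}\cdots I_{\varepsilon,\gamma_{\sigma(1)},\gamma_{\sigma(0)}}\big)\Big|
\end{equation*}
is bounded by the right hand side of \eqref{4.12}. The permutation sum contributes at most a factor $(k+1)!$, which is absorbed into $C$. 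So we may drop the antisymmetrization and bound each cyclic product with $\sigma$ fixed. After summing over all $\gamma_1,\dots,\gamma_k$, the relabelling by $\sigma$ only permutes which variable is pinned to $1_\Gamma$. By the equivariance \eqref{3.9}, that variable can be moved back to $1_\Gamma$ without changing the absolute values.

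For a fixed cyclic word, we write the trace as an integral of the pointwise trace of the kernel over the product of domains:
\begin{equation*}
\Big|\mathrm{Tr}(\cdots)\Big|\leqslant C\int_{F}\int_{\gamma_{1}F}\cdots\int_{\gamma_kF}\lV I_\varepsilon(x_0,x_k)\rV\cdots\lV I_\varepsilon(x_1,x_0)\rV\,dv(x_k)\cdots dv(x_0).
\end{equation*}
Here the rank of $S_{\widetilde M}$ is absorbed into $C$, and the ordering is dictated by the chosen $\sigma$. Summing over $\gamma_1,\dots,\gamma_k\in\Gamma$ glues the domains $\gamma_iF$ into $\widetilde M$, since $F$ is a fundamental domain. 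This leaves $\int_F\int_{\widetilde M^k}$ of a cyclic product of $k+1$ kernel norms.

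We then estimate this integral. The closing factor, which returns to $x_0$, is bounded by $\sup_{x,x'}\lV I_\varepsilon(x,x')\rV$. The remaining $k$ factors form a chain starting at $x_0$, and we integrate out the variables successively from the far end. Each integration yields at most $\sup_x\int_{\widetilde M}\lV I_\varepsilon(x,x')\rV dv(x')$, possibly with the roles of $x$ and $x'$ swapped. The swap is harmless because $I_\varepsilon$ is self-adjoint up to the grading signs in \eqref{2.21}, so $\lV I_\varepsilon(x,x')\rV$ and $\lV I_\varepsilon(x',x)\rV$ agree up to a constant. Alternatively, we can bound $I_\varepsilon^*$ directly from its definition, since each entry is a functional calculus of $\DM$. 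The final integral over $x_0\in F$ contributes $\mathrm{Vol}(F)=\mathrm{Vol}(M)$, which yields \eqref{4.12}.

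The main obstacle is bookkeeping rather than analysis. First, we must justify that the $\Gamma$-invariant chain \eqref{3.10}, indexed with $\gamma_0=1_\Gamma$, has simplicial $\ell^1$-norm controlled by the sum over orbit representatives. Second, we must check that the order of the kernels under a general permutation $\sigma$ still allows the successive integration to telescope. For the second point, any cyclic arrangement of the variables is a cycle graph through all $k+1$ points. Removing one edge, the sup-bounded closing factor, leaves a path, which can always be integrated leaf by leaf.
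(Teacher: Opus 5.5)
Your argument is the paper's argument. It uses Gromov's mapping theorem \eqref{4.1m} and the explicit chain \eqref{3.10}, writes the trace as an integral of pointwise traces, and glues the sum over $\gamma_1,\dots,\gamma_k$ into $\int_{F\times\widetilde{M}^k}$. It then bounds one factor by its supremum and integrates the remaining chain one variable at a time. Your extra bookkeeping is correct and fills in what the paper leaves implicit: the orbit representatives under $f^{0,\infty}$, and moving the pinned variable back to $1_\Gamma$ via \eqref{3.9}.

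One step, however, rests on a false claim. $I_\varepsilon$ is not self-adjoint up to grading signs, because the difference idempotent is an idempotent, not a projection. Concretely, take the $(1,2)$ entry of \eqref{2.21}. Its adjoint is the $S_{\widetilde{M}}^+\to S_{\widetilde{M}}^-$ component of $-(\phi_{\varepsilon,1}(1-\phi_{\varepsilon,1})\phi_{\varepsilon,0})(\DM)$. The $(2,1)$ entry is the same component of $-(\phi_{\varepsilon,1}\phi_{\varepsilon,0})(\DM)$. The two differ by $(\phi_{\varepsilon,1}^2\phi_{\varepsilon,0})(\DM)$, so $\lV I_\varepsilon(x,x')\rV$ and $\lV I_\varepsilon(x',x)\rV$ cannot be compared pointwise.

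This claim is load-bearing under your choice of closing factor. If that factor is $I_\varepsilon(x_0,x_k)$, the one returning to $x_0$, then integrating the chain $I_\varepsilon(x_k,x_{k-1})\cdots I_\varepsilon(x_1,x_0)$ from the far end $x_k$ always integrates over a \emph{first} argument. Your fallback of bounding $I_\varepsilon^*$ directly proves only a variant of \eqref{4.12}, with $\sup_{x'}\int\lV I_\varepsilon(x,x')\rV\,dv(x)$ in place of the stated factor. That variant would still suffice for Theorem \ref{t1.2'}, but it is not the inequality claimed.

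The fix is that no swap is ever needed. Instead bound the factor whose \emph{second} argument is the pinned variable, here $\lV I_\varepsilon(x_1,x_0)\rV$. Then integrate $x_1,x_2,\dots,x_k$ in that order. Each variable now occurs only as the second argument of a single remaining factor, which gives exactly $\sup_x\int\lV I_\varepsilon(x,x')\rV\,dv(x')$ at every step. This is what the paper does in \eqref{4.5n}: in its ordering it bounds $\lV I_\varepsilon(x_k,x_0)\rV$ and integrates $I_\varepsilon(x_0,x_1)\cdots I_\varepsilon(x_{k-1},x_k)$ starting from $x_k$. The same choice works for every $\sigma$, so your cycle-graph remark should be sharpened to a directed cycle, with the removed edge taken to be the one pointing into the pinned vertex.
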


	\begin{pro}
		In \eqref{3.10}, we obtain an explicit singular cycle representing the homology class $f_*(\widehat{A}^{[m-k]}(T{M})\cap[M])$. By the definition of the simplicial norm in \eqref{gromovnorm}, the sum of the absolute values of the coefficients of this cycle gives an upper bound for $\bV f_*\big(\widehat{A}^{[m-k]}(TM)\cap[M]\big)\bV_{\ell^1}$. Then by \eqref{4.1m}, this also gives an upper bound for $\bV\widehat{A}^{[m-k]}(TM)\cap[M]\bV_{\ell^1}$. Therefore,
		\begin{equation}\label{4.5n}
			\begin{split}
				&\bV\widehat{A}^{[m-k]}(T{M})\cap[M]\bV_{\ell^1}\\
				&\leqslant\sum_{\substack{\gamma_0=\gamma_{k+1}=1_\Gamma,\\ \gamma_1,\cdots,\gamma_{k}\in\Gamma,\\ \sigma\in \mathfrak{S}_{k+1}}}\Bv\mathrm{Tr}^{L^2(\gamma_{\sigma(0)}F,S_{\widetilde{M}})}\big(I_{\varepsilon,\gamma_{\sigma(k+1)},\gamma_{\sigma(k)}}\cdots I_{\varepsilon,\gamma_{\sigma(1)},\gamma_{\sigma(0)}}\big)\Bv\\
				&\leqslant C\int_{F\times \widetilde{M}^k}\lV I_\varepsilon(x_0,x_1)\cdots I_\varepsilon(x_{k},x_0)\rV_{\mathrm{End}(S_{\widetilde{M}})} dv_{\widetilde{M}^{k+1}}(x_{0},\cdots,x_{k})
				\\
				&\leqslant C  \Big(\mathrm{sup}_{x_k,x_0\in \widetilde{M}}\lV I_\varepsilon(x_k,x_0)\rV_{\mathrm{End}(S_{\widetilde{M}})}\Big)
				\\
				&\quad\cdot \int_{F\times \widetilde{M}^k}\lV I_\varepsilon(x_0,x_1)\cdots I_\varepsilon(x_{k-1},x_k)\rV_{\mathrm{End}(S_{\widetilde{M}})} dv_{\widetilde{M}^{k+1}}(x_{0},\cdots,x_{k}),
			\end{split}
		\end{equation}
		which implies \eqref{4.12}.\qed
	\end{pro}

	By the definition of $I_\varepsilon$ in \eqref{2.21} and the estimate \eqref{4.12}, this reduces to choosing suitable functions with compactly supported Fourier transforms, so the following two terms tend to zero as $\varepsilon\to\infty$,
	\begin{equation}
		\begin{split}
			\lV I_\varepsilon(x,x')\rV_{\mathrm{End}(S_{\widetilde{M}})},\quad\int_{\widetilde{M}}\lV I_\varepsilon(x,x')\rV_{\mathrm{End}(S_{\widetilde{M}})}dv_{\widetilde{M}}(x')
		\end{split}
	\end{equation}
	The pointwise estimate is relatively straightforward. The main difficulty is the integral estimate. Its form suggests using heat kernel estimates. We shall first construct an auxiliary infinite propagation kernel using Gaussian functions, and then cut off to obtain an finite propagation kernel.

	\section{Simplicial Norm Vanishing}\label{LSE}

	In this section, we prove vanishing results for simplicial norms. In \S\,\ref{lse1}, we introduce some useful heat kernel estimates. In \S\,\ref{saip}, we construct and estimate an auxiliary infinite propagation kernel. In \S\,\ref{s5.2}, we introduce the cutoff and estimate the actual finite propagation kernel.

	Again, we focus our attention on the even dimensional case. The odd dimensional case follows from rescaling the metric and taking the product with a closed odd dimensional hyperbolic manifold.

	\subsection{Heat kernel estimates}\label{lse1}
	
	Let $\widetilde\Delta$ be the (nonnegative) scalar Laplacian on $\widetilde M$. We denote its heat operator by $e^{-s\widetilde{\Delta}}$ and its heat kernel by $p_s(x,x')$.
	
	By the maximum principle of the heat equation, see for instance Dodziuk \cite{DodziukMaximumPrinciple}, for any $w\in L^2(\widetilde M)$ with  $0\leqslant w\leqslant 1$, we have
	\begin{equation}\label{eq:maxprinciple}
		0\leqslant e^{-s\widetilde\Delta}w\leqslant 1.
	\end{equation}
	Consequently, the heat kernel satisfies
	\begin{equation}\label{x5.1}
		p_s(x,x')\geqslant0,\quad \int_{\widetilde{M}}p_s(x,x')dv_{\widetilde{M}}(x')\leqslant1.
	\end{equation}
	The first inequality is immediate from the positivity preserving property. The second inequality follows from applying \eqref{eq:maxprinciple} to the charateristic function of every compact $K\subset\widetilde M$,
	\begin{equation}
		e^{-s\widetilde\Delta}(\mathbbm{1}_K)(x)=\int_{K}p_s(x,x')dv_{\widetilde{M}}(x')\leqslant1.
	\end{equation}

	The following estimate is classical, while we include a proof for completeness.
	\begin{prop}
		Under the positive scalar curvature assumption, 
		there exists $c>0$ such that
		we have	for any $s>0$,
		\begin{equation}\label{x5.3}
			\bV e^{-s\DM^2}(x,x')\bV_{\mathrm{End}(S_{\widetilde{M}})}\leqslant e^{-cs}p_s(x,x').
		\end{equation}
	\end{prop}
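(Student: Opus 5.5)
The estimate \eqref{x5.3} is a domination (Kato-type) inequality, and I would prove it by a Feynman--Kac / parabolic comparison argument built on the Schrödinger--Lichnerowicz formula \eqref{2.18n}. Since $M$ is closed and carries a metric of positive scalar curvature, the lifted scalar curvature on $\widetilde M$ satisfies $\mathrm{Sc}_{g^{T\widetilde M}}\geqslant \mathrm{Sc}_0>0$ uniformly. I set $c=\mathrm{Sc}_0/4$, so that \eqref{2.18n} gives $\DM^2\geqslant \Delta^{S_{\widetilde M}}+c$ pointwise as a zeroth-order perturbation.

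\textbf{Reduction to the Bochner Laplacian.} Write
\begin{equation*}
\DM^2=(\Delta^{S_{\widetilde M}}+c)+V,\qquad V=\tfrac14\mathrm{Sc}-c\geqslant0,
\end{equation*}
where $V$ is a nonnegative scalar potential. By the Trotter product formula, or equivalently Feynman--Kac with the multiplicative factor $e^{-\int V}\leqslant 1$, the kernel of $e^{-s\DM^2}$ is pointwise dominated in operator norm by the kernel of $e^{-s(\Delta^{S_{\widetilde M}}+c)}=e^{-cs}e^{-s\Delta^{S_{\widetilde M}}}$. It therefore suffices to show the Kato domination inequality
\begin{equation*}
\bigl\|e^{-s\Delta^{S_{\widetilde M}}}(x,x')\bigr\|\leqslant p_s(x,x').
\end{equation*}

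\textbf{Kato step.} I would prove the semigroup version: for every section $u$, $|e^{-s\Delta^{S}}u|\leqslant e^{-s\widetilde\Delta}|u|$ pointwise. For $u_s=e^{-s\Delta^S}u$, Kato's inequality $\widetilde\Delta|u_s|\leqslant \mathrm{Re}\langle \Delta^S u_s,u_s/|u_s|\rangle$ (interpreted distributionally, or via $\sqrt{|u_s|^2+\delta}$) shows that $|u_s|$ is a subsolution of the scalar heat equation. The maximum principle for the heat equation (as in Dodziuk, already used for \eqref{eq:maxprinciple}) then gives $|u_s|\leqslant e^{-s\widetilde\Delta}|u|$. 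This is legitimate on the complete manifold $\widetilde M$ with bounded geometry, since it is a cover of a compact manifold.

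\textbf{Passing to kernels.} Apply the semigroup inequality to $u$ approximating $\delta_{x'}\otimes v$ with $|v|=1$ and take the supremum over $v$. This yields the kernel bound, and with the exponential factor we obtain \eqref{x5.3}.

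\textbf{Main obstacle.} The main difficulty is justifying the Kato and maximum-principle step rigorously on the noncompact $\widetilde M$, namely the growth conditions needed for uniqueness of heat solutions and the regularization near the zeros of $u_s$. If that route proves delicate, I would instead use the probabilistic Feynman--Kac representation with stochastic parallel transport, which is an isometry and thus gives the domination immediately.
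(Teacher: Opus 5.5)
Your ingredients (Lichnerowicz, Kato's inequality, the parabolic maximum principle, approximation of $\delta_{x'}v$) are the paper's, but the reduction step is not valid as written. You claim that Trotter, or Feynman--Kac with the weight $e^{-\int V}\leqslant 1$, gives the kernel inequality $\|e^{-s\DM^2}(x,x')\|\leqslant e^{-cs}\|e^{-s\Delta^{S_{\widetilde M}}}(x,x')\|$. You then chain it with $\|e^{-s\Delta^{S_{\widetilde M}}}(x,x')\|\leqslant p_s(x,x')$.

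Monotonicity of the kernel in a nonnegative potential holds for positivity-preserving scalar semigroups, but not for bundle-valued ones. The kernel of $e^{-s\Delta^{S_{\widetilde M}}}$ is $p_s(x,x')$ times a bridge average $\mathbb{E}[\tau]$ of parallel transports $\tau$ along paths. This average can be small through cancellation caused by the curvature of the connection, and the weight $e^{-\int V}$ can suppress part of the paths and undo that cancellation. A discrete illustration: for the connection Laplacian on a $4$-cycle with holonomy $-1$, the heat kernel between opposite vertices vanishes identically by symmetry. Yet a large potential at one of the two intermediate vertices makes it nonzero. In Feynman--Kac terms, $|\mathbb{E}[e^{-\int V}\tau\, u(X_s)]|$ is bounded by $\mathbb{E}|u(X_s)|$, not by $|\mathbb{E}[\tau\, u(X_s)]|$.

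The repair is to work throughout with domination by the scalar semigroup, which your Kato step already provides. You have $|e^{-t\Delta^{S_{\widetilde M}}}w|\leqslant e^{-t\widetilde\Delta}|w|$, the bound $0\leqslant e^{-tV/n}\leqslant 1$, and the fact that $e^{-t\widetilde\Delta}$ preserves pointwise order. Induction then gives $|(e^{-\frac sn(\Delta^{S_{\widetilde M}}+c)}e^{-\frac snV})^n u|\leqslant e^{-cs}e^{-s\widetilde\Delta}|u|$. Trotter then yields $|e^{-s\DM^2}u|\leqslant e^{-cs}e^{-s\widetilde\Delta}|u|$, and your delta-approximation step gives \eqref{x5.3}.

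The splitting is in fact unnecessary, and dropping it gives exactly the paper's proof. Keep the term $\frac14\mathrm{Sc}$ in the Kato computation for $u_s=e^{-s\DM^2}u$, obtaining $(\partial_s+\widetilde\Delta)|u_s|\leqslant -c|u_s|$ distributionally. Then apply the maximum principle to $e^{cs}|u_s|-e^{-s\widetilde\Delta}|u|$. Likewise, your Feynman--Kac fallback is fine if applied directly to $\DM^2$ with weight $e^{-\int\frac14\mathrm{Sc}}\leqslant e^{-cs}$, rather than routed through the kernel of $e^{-s\Delta^{S_{\widetilde M}}}$.
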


	\begin{pro}
		For any smooth $u\in L^2\big(\widetilde{M},S_{\widetilde{M}}\big)$, we set
		\begin{equation}
			u_s=e^{-s\widetilde D^2}u.
		\end{equation}
		By \eqref{2.18n}, we have
		\begin{equation}\label{5.6}
			\pa_su_s=-\widetilde D^2u_s=-\Big(\Delta^{S_{\widetilde{M}}}+\frac{1}{4}\mathrm{Sc}_{g^{T\widetilde{M}}}\Big)u_s.
		\end{equation}
		
		By \eqref{5.6}, the pointwise norm function $\lV u_s\rV_{S_{\widetilde{M}}}$ of $u_s$ satisfies
		\begin{equation}\label{5.7}
			\frac{1}{2}\pa_s\big(\lV u_s\rV_{S_{\widetilde{M}}}^2\big)=-\operatorname{Re}\big(\langle\Delta^{S_{\widetilde{M}}}u_s,u_s\rangle_{S_{\widetilde{M}}}\big)-\frac{1}{4}\mathrm{Sc}_{g^{T\widetilde{M}}}\lV u_s\rV_{S_{\widetilde{M}}}^2.
		\end{equation}
		We have
		\begin{equation}\label{5.8}
			\frac{1}{2}\widetilde\Delta\big(\lV u_s\rV_{S_{\widetilde{M}}}^2\big)=\operatorname{Re}(\langle\Delta^{S_{\widetilde{M}}}u_s,u_s\rangle_{S_M})-\Vert\nabla^{S_{\widetilde{M}}} u_s\Vert_{T^*\widetilde{M}\otimes S_{\widetilde{M}}}^2.
		\end{equation}
		Adding \eqref{5.7} and \eqref{5.8}, and using Kato's inequality, see for instance Hess-Schrader-Uhlenbrock \cite{MR602436}, we get
		\begin{equation}\label{5.9}
			\begin{split}
				\frac{1}{2}(\partial_s+\widetilde\Delta)\big(\lV u_s\rV_{S_{\widetilde{M}}}^2\big)=&-\Vert\nabla^{S_{\widetilde{M}}} u_s\Vert_{T^*\widetilde{M}\otimes S_{\widetilde{M}}}^2-\frac{1}{4}\mathrm{Sc}_{g^{T\widetilde{M}}}\lV u_s\rV_{S_{\widetilde{M}}}^2\\
				\leqslant&-\bV d\lV u_s\rV_{S_{\widetilde{M}}}\bV_{T^*\widetilde{M}\otimes S_{\widetilde{M}}}^2-\frac{1}{4}\mathrm{Sc}_{g^{T\widetilde{M}}}\lV u_s\rV_{S_{\widetilde{M}}}^2.
			\end{split}
		\end{equation}
		While we also have
		\begin{equation}\label{5.10}
			\begin{split}
				\frac{1}{2}(\partial_s+\widetilde\Delta)\big(\lV u_s\rV_{S_{\widetilde{M}}}^2\big)=&\lV u_s\rV_{S_{\widetilde{M}}}\cdot (\partial_s-\widetilde\Delta)\big(\lV u_s\rV_{S_{\widetilde{M}}}\big)\\
				&-\bV d\lV u_s\rV_{S_{\widetilde{M}}}\bV_{T^*\widetilde{M}\otimes S_{\widetilde{M}}}^2.
			\end{split}
		\end{equation}
		Comparing \eqref{5.9} and \eqref{5.10}, we get
		\begin{equation}
			\lV u_s\rV_{S_{\widetilde{M}}}\Big((\partial_s+\widetilde\Delta)\big(\lV u_s\rV_{S_{\widetilde{M}}}\big)+\frac{1}{4}\mathrm{Sc}_{g^{T\widetilde{M}}}\lV u_s\rV_{S_{\widetilde{M}}}\Big)\leqslant 0.
		\end{equation}
		Using approximation, in the distribution sense, we have
		\begin{equation}\label{5.12x}
			(\partial_s+\widetilde\Delta)(\lV u_s\rV_{S_{\widetilde{M}}})\leqslant-\frac{1}{4}\mathrm{Sc}_{g^{T\widetilde{M}}}\lV u_s\rV_{S_{\widetilde{M}}}\leqslant -c\lV u_s\rV_{S_{\widetilde{M}}}
		\end{equation}
		for some $c>0$ by the positive scalar curvature assumption.

		We define a function
		\begin{equation}
			w_s(x)=\displaystyle e^{cs}\lV u_s(x)\rV_{S_{\widetilde{M}}}-\int_{\widetilde M}p_s(x,x')\lV u(x')\rV_{S_{\widetilde{M}}}dv_{\widetilde{M}}(x').
		\end{equation}
		By \eqref{5.12x}, it satisfies
		\begin{equation}
			(\partial_s+\widetilde\Delta)(w_s)\leqslant 0,\quad w_0=0.
		\end{equation}
		It follows from the parabolic maximum principle that $w_s\leqslant 0$. In other words, we have for any smooth $u\in L^2\big(\widetilde{M},S_{\widetilde{M}}\big)$,
		\begin{equation}\label{5.15}
			\begin{split}
				&\bbV\int_{\widetilde M} e^{-s\widetilde D^2}(x,x')u(x')dv_{\widetilde{M}}(x')\bbV_{S_{\widetilde{M}}}\\
				&\leqslant e^{-cs}\int_{\widetilde M}p_s(x,x')\lV u(x')\rV_{S_{\widetilde{M}}}dv_{\widetilde{M}}(x').
			\end{split}
		\end{equation}

		For $x'\in\widetilde M$ and $v\in S_{\widetilde M,x'}$, choose a sequence of smooth compactly supported sections converging distributionally to $\delta_{x'}v$. Passing to the limit in \eqref{5.15} gives
		\begin{equation}
			\lV e^{-s\widetilde D^2}(x,x')v\rV_{S_{\widetilde{M}}}
			\leqslant
			e^{-cs}p_s(x,x')\lV v\rV_{S_{\widetilde{M}}}.
		\end{equation}
		Taking the supremum over all unit vectors $v$ proves \eqref{x5.3}.\qed
	\end{pro}

	\subsection{Auxiliary infinite propagation kernel}\label{saip}

	In \eqref{2.18} and \eqref{2.19}, we replace $\phi$ with auxiliary $\Phi$ and take
	\begin{equation}\label{x5.4}
		\begin{split}
			\widehat{\Phi_{\varepsilon,0}'}(\xi)&=2e^{-(\xi/\varepsilon^{\beta})^2/4},\quad \Phi_{\varepsilon,0}'(\lambda)
			=
			\frac{2\varepsilon^\beta}{\sqrt\pi}
			e^{-(\varepsilon^{\beta}\lambda)^2},\\
			\Phi_{\varepsilon,0}(\lambda)&=\int_0^\lambda\frac{2\varepsilon^\beta}{\sqrt\pi}
			e^{-(\varepsilon^{\beta}s)^2}ds,\qquad\Phi_{\varepsilon,1}=\Phi_{\varepsilon,0}^2-1.
		\end{split}
	\end{equation}
	Here $\beta>0$ is a parameter to be determined later. The reason for this choice is that the Gaussian is preserved, up to rescaling, by the Fourier transform. Replacing $\xi$ by $\xi/\varepsilon^\beta$ corresponds under Fourier inversion to replacing $\lambda$ by $\varepsilon^\beta\lambda$. Hence, as $\varepsilon\to\infty$, we have $\Phi_{\varepsilon,1}\to 0$.
	
	We introduce the complementary error function for $\lambda>0$,
	\begin{equation}\label{x5.5}
		\mathrm{erfc}(\lambda)=\int_{\lambda}^\infty\frac{2}{\sqrt{\pi}}e^{-s^2}ds=\frac{2}{\pi}\int_0^\infty
		\frac{e^{-(1+s^2)\lambda^2}}{1+s^2}ds.
	\end{equation}
	The second identity is classical. It can be verified by differentiating both sides with respect to $\lambda$. 
	
	By \eqref{x5.4} and \eqref{x5.5}, we have
	\begin{equation}\label{x5.6}
		\begin{split}
			\Phi_{\varepsilon,0}(\lambda)&=\mathrm{sgn}(\lambda)\big(1-\mathrm{erfc}(\varepsilon^\beta\lv\lambda\rv)\big),\\
			\Phi_{\varepsilon,1}(\lambda)&=-2\mathrm{erfc}(\varepsilon^\beta\lv\lambda\rv)+\mathrm{erfc}(\varepsilon^\beta\lv\lambda\rv)^2.
		\end{split}
	\end{equation}
	
	By the first and second representations in \eqref{x5.5}, using the substitutions $s=\lv\lambda\rv s^{1/2}$ and $s=\varepsilon^{2\beta}(1+s^2)$ respectively, we obtain
	\begin{equation}\label{x5.7}
		\begin{split}
			\sgn(\lambda)\mathrm{erfc}(\varepsilon^\beta\lv\lambda\rv)&=\frac{1}{\sqrt{\pi}}\int_{\varepsilon^{2\beta}}^\infty s^{-1/2}\lambda e^{-s\lambda^2}ds,\\
			\mathrm{erfc}(\varepsilon^\beta\lv\lambda\rv)&=\int_{\varepsilon^{2\beta}}^\infty
			\frac{\varepsilon^\beta}{\pi s(s-\varepsilon^{2\beta})^{1/2}}e^{-s\lambda^2}ds.
		\end{split}
	\end{equation}
	By \eqref{2.21}, \eqref{x5.6}, and \eqref{x5.7}, every entry of $I_\varepsilon$ is a polynomial of $\sgn(\lambda)\mathrm{erfc}(\varepsilon^\beta\lv\lambda\rv)$ and $\mathrm{erfc}(\varepsilon^\beta\lv\lambda\rv)$. Therefore, by \eqref{4.12}, it suffices to bound
	\begin{equation}\label{x5.8}
		\begin{split}
			&\frac{1}{\sqrt{\pi}}\int_{\varepsilon^{2\beta}}^\infty s^{-1/2}\int_{\widetilde{M}}\bV\big(\DM e^{-s\DM^2}\big)(x,x')\bV_{\mathrm{End}(S_{\widetilde{M}})}dv_{\widetilde{M}}(x')ds,\\
			&\int_{\varepsilon^{2\beta}}^\infty
			\frac{\varepsilon^\beta}{\pi s\sqrt{s-\varepsilon^{2\beta}}}\int_{\widetilde{M}}\bV e^{-s\DM^2}(x,x')\bV_{\mathrm{End}(S_{\widetilde{M}})}dv_{\widetilde{M}}(x')ds.
		\end{split}
	\end{equation}
	
	From \eqref{x5.1} and \eqref{x5.3}, the first term in \eqref{x5.8} can be bounded by
	\begin{equation}\label{5.9x}
		\begin{split}
			&\frac{1}{\sqrt{\pi}}\int_{\varepsilon^{2\beta}}^\infty s^{-1/2}\int_{\widetilde{M}^2}\bV\big(\DM e^{-\DM^2}\big)(x,x'')\bV_{\mathrm{End}(S_{\widetilde{M}})}\\
			&\qquad\qquad\qquad\qquad\cdot\bV e^{-(s-1)\DM^2}(x'',x')\bV_{\mathrm{End}(S_{\widetilde{M}})}dv_{\widetilde{M}^2}(x'',x')ds\\
			&\leqslant \frac{1}{\sqrt{\pi}}\int_{\varepsilon^{2\beta}}^\infty s^{-1/2}\int_{\widetilde{M}^2}\bV\big(\DM e^{-\DM^2}\big)(x,x'')\bV_{\mathrm{End}(S_{\widetilde{M}})}\\
			&\qquad\qquad\qquad\qquad\quad\cdot e^{-c(s-1)}p_{(s-1)}(x'',x')dv_{\widetilde{M}^2}(x'',x')ds\\
			&\leqslant\Big(\int_{\widetilde{M}}\bV\big(\DM e^{-\DM^2}\big)(x,x'')\bV_{\mathrm{End}(S_{\widetilde{M}})}dv_{\widetilde{M}}(x'')\Big)\\
			&\quad\cdot\frac{1}{\sqrt{\pi}}\int_{\varepsilon^{2\beta}}^\infty s^{-1/2}e^{-c(s-1)}ds\\
			&\leqslant C\int_{\varepsilon^{2\beta}}^\infty s^{-1/2}e^{-cs}ds=C\mathrm{erfc}(c^{1/2}\varepsilon^\beta).
		\end{split}
	\end{equation} 
	Here the last inequality follows from
	\begin{equation}
		\bV\big(\DM e^{-\DM^2}\big)(x,x'')\bV_{\mathrm{End}(S_{\widetilde{M}})}\leqslant Ce^{-c(d_{\widetilde{M}}(x,x''))^2}
	\end{equation}
	for some $C,c>0$, and the fact that $\widetilde M$ has at most exponential volume growth, see for example Chen-Wang-Xie-Yu \cite[Lemma 3.8]{MR4673948}.
	
	Using  \eqref{x5.1} and \eqref{x5.3} again, the second term of \eqref{x5.8} can be bounded by
	\begin{equation}\label{5.10x}
		\int_{\varepsilon^{2\beta}}^\infty
		\frac{\varepsilon^\beta}{\pi s(s-\varepsilon^{2\beta})^{1/2}}e^{-cs} ds=\mathrm{erfc}\big(c^{1/2}\varepsilon^\beta\big).
	\end{equation}

	For $\varepsilon$ sufficiently large, we have $\mathrm{erfc}\big(c^{1/2}\varepsilon^\beta\big)\leqslant e^{-c\varepsilon^{2\beta}}$. Hence by \eqref{5.9x} and \eqref{5.10x}, the right hand side of \eqref{4.12} tends to zero for the auxiliary infinite propagation kernel.

	\subsection{Proof of the vanishing theorem}\label{s5.2}
	
	We now prove Theorem \ref{t1.2'} by passing from the auxiliary infinite propagation kernel to the finite propagation kernel. 
	
	\begin{proof}[Proof of Theorem \ref{t1.2'}]
		Choose a smooth even function $\rho$ such that
		\begin{equation}
			0\leqslant\rho\leqslant1,
			\quad\supp(\rho)\subseteq[-1,1],\quad	\rho=1\text{ on }[-1/2,1/2].
		\end{equation}
		Recall that in the construction \eqref{2.18} and \eqref{2.19}, it is enough to define $\widehat{\phi_{\varepsilon,0}'}$ by
		\begin{equation}\label{5.12}
			\widehat{\phi_{\varepsilon,0}'}(\xi)
			=\rho\big(\tfrac{6m\xi}{\varepsilon}\big)\widehat{\Phi_{\varepsilon,0}'}(\xi)
			=2\rho\big(\tfrac{6m\xi}{\varepsilon}\big)e^{-(\xi/\varepsilon^{\beta})^2/4}.
		\end{equation}

		We claim that for any $\ell\in\mathbb{N}$, there are constants $C,c>0$ such
		that
		\begin{equation}\label{5.13x}
			\begin{split}
				\sup_{\lambda\in\R}
				(1+\lv\lambda\rv)^\ell\lv(\phi_{\varepsilon,0}-\Phi_{\varepsilon,0})(\lambda)\rv
				\leqslant
				Ce^{-c\varepsilon^{2(1-\beta)}},\\
				\sup_{\lambda\in\R}
				(1+\lv\lambda\rv)^\ell\lv(\phi_{\varepsilon,1}-\Phi_{\varepsilon,1})(\lambda)\rv
				\leqslant
				Ce^{-c\varepsilon^{2(1-\beta)}}.
			\end{split}
		\end{equation}
		To see this, by \eqref{5.12} we get
		\begin{equation}\label{5.14}
			\widehat{(\phi_{\varepsilon,0}-\Phi_{\varepsilon,0})}(\xi)
			=2\big(\rho\big(\tfrac{6m\xi}{\varepsilon}\big)-1\big)\frac{e^{-(\xi/\varepsilon^{\beta})^2/4}}{\sqrt{-1}\xi},
		\end{equation}
		which is supported on $\lv\xi\rv\geqslant C\varepsilon$.  On this set,
		\begin{equation}
			e^{-(\xi/\varepsilon^{\beta})^2/4}\leqslant e^{-c\varepsilon^{2(1-\beta)}}.
		\end{equation}
		Differentiating \eqref{5.14} finitely many times would produce a polynomial of $\varepsilon^{-1}$ and $\xi$, which can be absorbed into the Gaussian decay. We get the first inequality in \eqref{5.13x} by the inverse Fourier transform, and the second one follows from \eqref{2.19} and \eqref{x5.4}.

		By \eqref{5.9x}, \eqref{5.10x}, \eqref{5.13x}, and the Sobolev's inequality, we obtain
		\begin{equation}\label{x5.18}
			\int_{\widetilde{M}}\bV I_\varepsilon(x,x')\bV_{\mathrm{End}(S_{\widetilde{M}})}dv_{\widetilde{M}}(x')\leqslant Ce^{-c\varepsilon^{2\beta}}+Ce^{-c\varepsilon^{2(1-\beta)}+C\varepsilon}.
		\end{equation}
		Here the factor $Ce^{C\varepsilon}$ arises because $I_\varepsilon$ has propagation at most $\varepsilon/m$. Thus it should be integrated only over $B^{\widetilde{M}}(x,\varepsilon/m)$, whose volume is bounded by some $Ce^{C\varepsilon}$.

		It suffices to choose $\beta$ such that $\beta>0$ and $2(1-\beta)>1$, or equivalently $0<\beta<1/2$. Then by \eqref{x5.18}, the right hand side of \eqref{4.12} tend to zero as $\varepsilon\to\infty$. This completes the proof.
	\end{proof}

	\subsection{Obstruction to nonnegative scalar curvature}
	
	We now prove Corollary \ref{t1.2}.
	
	\begin{proof}[Proof of Corollary \ref{t1.2}]
		Suppose that $\Sc_{g^{TM}}\geqslant0$ on $M$. If $\Sc_{g^{TM}}$ is positive at some point,
		the conformal Laplacian produces a metric of positive scalar curvature,
		contrary to Theorem \ref{t1.2'}. Hence $\Sc_{g^{TM}}\equiv0$. Bourguignon's
		deformation theorem then implies that ${g^{TM}}$ is Ricci-flat, otherwise $M$ would again admit a metric of positive scalar curvature. By the Cheeger-Gromoll splitting theorem, the fundamental group $\pi_1(M)$ is virtually abelian and therefore amenable. It then follows from Gromov's mapping theorem  \cite[\S\,3.1]{MR686042} that $\bV[M]\bV_{\ell^1}=0$, which  contradicts the assumption.
		
		If $M$ is aspherical, then its universal cover is contractible, hence is spin. The first assertion
		therefore applies to $M$.
	\end{proof}

	\addcontentsline{toc}{section}{References}
	\bibliographystyle{abbrv}
	\def\cprime{$'$} \def\cprime{$'$}

\end{document}